\documentclass[11pt]{amsart}





\usepackage{amsmath, amsfonts, amssymb,amsthm}
\usepackage{amstext}
\usepackage{mathrsfs}

\usepackage{float,epsf,subfigure}
\usepackage[all,cmtip]{xy}
\usepackage{hyperref}
\usepackage{algorithm}
\usepackage{algorithmic}
\usepackage{mathtools}
\usepackage{float}
\usepackage{bm}
\setlength{\parskip}{.1 in plus 2pt minus 2pt}
\setlength{\textheight}{7.8 in}

\theoremstyle{plain}
\newtheorem{theorem}{Theorem}[section]

\newtheorem{cor}[theorem]{Corollary}
\newtheorem{conj}[theorem]{Conjecture}

\newtheorem{def-thm}[theorem]{Definition-Theorem}
\newtheorem{lemma}[theorem]{Lemma}

\newtheorem{defi}[theorem]{Definition}

\newtheorem{theoremnn}{Theorem}

\theoremstyle{definition}

{

\begin{document}
\title[Algebraic degeneracy of holomorphic curves]{Algebraic degeneracy of holomorphic curves}
\author[X.J. Dong and P.C. Hu] { Xianjing Dong and Peichu Hu }

\address{School of Mathematics \\ China University of Mining and Technology \\ Xuzhou \\ 221116 \\ Jiangsu \\ P. R. China}
\email{xjdong05@126.com}
\address{Department of Mathematics \\ Shandong University  \\  Shandong, Jinan, 250100, P.R. China}
\email{pchu@sdu.edu.cn}


\subjclass[2010]{30D35; 32H30.} \keywords{Green-Griffiths; holomorphic curve; value distribution}
\date{}
\maketitle \thispagestyle{empty} \setcounter{page}{1}

\begin{abstract}
\noindent  We consider  the algebraic degeneracy of holomorphic curves from a point of  view of meromorphic vector fields.   Employing the notion of Jocabian sections introduced by W. Stoll, 
we establish   a 
Second Main Theorem type inequality.  As  consequences,   several algebraic degeneracy theorems are obtained for holomorphic curves into a  complex projective variety.
\end{abstract}

\vskip\baselineskip

\setlength\arraycolsep{2pt}

\section{Introduction}

Recall that a smooth complex projective variety $X$ is said to be of \emph{general type} if   $K_X$  (canonical line bundle) is \emph{pseudo ample} (cf. \cite{Lang86}). 
The well-known  Green-Griffiths conjecture (cf. \cite{GG, Lang86, S2002})
stipulates  that 
\begin{conj}\label{conj1} Let  $X$ be  a  smooth complex projective  variety of general type. 
Then $X$  contains  no  algebraically non-degenerate holomorphic curves. 
\end{conj}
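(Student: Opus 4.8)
The plan is to argue by contradiction, converting the bigness of $K_X$ into a growth estimate that the Second Main Theorem of the paper forbids. Suppose $f\colon\CC\to X$ is a nonconstant holomorphic curve whose image is Zariski dense, i.e. $f$ is algebraically non-degenerate. Since $X$ is of general type, $K_X$ is big, so there exist an ample line bundle $A$, an effective divisor $E$ and an integer $m>0$ with $mK_X\cong A\otimes\Ocal(E)$; consequently $m\,T_f(r,K_X)=T_f(r,A)+T_f(r,\Ocal(E))+O(1)$ with $A$ ample, and since $f$ is nonconstant this forces $T_f(r,K_X)\to\infty$. The goal is to show that Zariski density is incompatible with this unboundedness.

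First I would choose a generically independent family of meromorphic vector fields $v_1,\dots,v_n$ on $X$ (equivalently, a meromorphic coframe $\omega_1,\dots,\omega_n$) and form the associated \emph{Jacobian section} in the sense of Stoll. Pulling back along the $n$-jet of $f$, this produces a meromorphic section $\mathfrak J_f$ of a line bundle assembled from $K_X$ and the polar divisors of the $v_i$; its zero divisor records the ramification of $f$ against the chosen frame, while $\mathfrak J_f\not\equiv 0$ precisely because $f$ is non-degenerate and the $v_i$ are generically independent.

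Next I would feed this into the Second Main Theorem type inequality established earlier, which, after the usual application of the Logarithmic Derivative Lemma to absorb the frame into an error term $S_f(r)=o\big(T_f(r,K_X)\big)$ off a set of finite Lebesgue measure, should yield an estimate of the shape
\[
T_f(r,K_X)\ \le\ N^{[1]}_{\mathfrak J_f}(r)\,+\,\sum_j m_f(r,D_j)\,+\,S_f(r),
\]
where $N^{[1]}_{\mathfrak J_f}$ is the truncated counting function of the zeros of $\mathfrak J_f$ and the $D_j$ are the polar divisors of the frame. The final step is to bound the right-hand side strictly below $T_f(r,K_X)$: truncation makes $N^{[1]}_{\mathfrak J_f}(r)$ comparable to the counting function of a \emph{fixed} divisor rather than to $T_f(r,K_X)$, and the proximity terms are handled by the First Main Theorem, so one would obtain $T_f(r,K_X)\le o\big(T_f(r,K_X)\big)$, which is impossible once $T_f(r,K_X)$ is unbounded. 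Hence no Zariski-dense $f$ can exist.

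The hard part is exactly this last step, together with the prior requirement of producing meromorphic vector fields whose Jacobian genuinely lives in a line bundle comparable to $K_X$. A variety of general type carries \textbf{no} nonzero global holomorphic vector fields, so one is forced to work with meromorphic ones, whose poles feed positive contributions into the counting and proximity functions that can, a priori, swamp the canonical term $T_f(r,K_X)$. Making the polar and ramification contributions uniformly small against the bigness of $K_X$ — so that the truncated Jacobian counting function is truly of lower order — is the crux, and it is precisely this estimate that is available only under additional positivity or frame-existence hypotheses. Accordingly, the strategy above establishes the statement whenever the Jacobian contribution can be controlled, and the algebraic degeneracy theorems derived in this paper should be read as the cases in which such control is attainable; the general estimate is what keeps the full Green-Griffiths conjecture open.
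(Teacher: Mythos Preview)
The statement you are attempting to prove is labelled \emph{Conjecture} in the paper; it is the Green--Griffiths conjecture, and the paper does \emph{not} prove it. There is therefore no ``paper's own proof'' to compare against. The paper quotes the conjecture as motivation and then establishes partial results (Theorems~\ref{mainthm}, \ref{ddc}, \ref{Siu-thm-Nst21} and Corollary~\ref{Cor1}) under additional hypotheses on the existence of meromorphic vector fields with controlled pole divisor.

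Your proposal is not a proof either, and to your credit you say so explicitly in the last paragraph. The genuine obstruction you identify is exactly the one that makes the conjecture open: on a variety of general type there are no nonzero holomorphic vector fields, so any frame $\mathfrak X_1,\dots,\mathfrak X_{n-1}$ must be meromorphic, and its pole divisor contributes a term $T_f(r,E)$ on the right-hand side of the Second Main Theorem inequality
\[
T_f(r,K_X)+N_{f,\mathrm{Ram}}(r)\ \le_{\mathrm{exc}}\ T_f(r,E)+O\big(\log T_f(r,L)+\log^+\log r\big).
\]
To derive a contradiction one needs $E^*\otimes K_X$ big, not merely $K_X$ big; nothing in ``general type'' alone guarantees the existence of such an $E$ with the required vector fields. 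Your claim that truncation makes $N^{[1]}_{\mathfrak J_f}(r)$ ``comparable to the counting function of a fixed divisor'' is unjustified and, in the paper's formulation, the ramification term sits on the \emph{left} of the inequality (it helps you), while the uncontrolled quantity on the right is $T_f(r,E)$, which is genuinely of the same order as $T_f(r,K_X)$ unless you impose the extra positivity hypothesis. So the step ``bound the right-hand side strictly below $T_f(r,K_X)$'' is precisely where the argument collapses in general, and no amount of Logarithmic Derivative Lemma absorbs $T_f(r,E)$ into an $S_f(r)$ error.

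In short: your outline recapitulates the mechanism behind the paper's partial results, but it does not and cannot close the gap to the full conjecture, which remains open.
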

A stronger version than Conjecture \ref{conj1} is   Green-Griffiths-Lang conjecture \cite{Lang86} which asserts  that 
if  $X$ is of general type,  then there is a proper algebraic subvariety $Z$ of $X$ such that
every  nonconstant holomorphic curve $f:\mathbb C\rightarrow X$ is contained in $Z.$ 
For more details about GG and GGL 
conjectures,  refer to Clemens \cite{Cle},  Demaily \cite{Dem1,Dem2}, Ein \cite{Ein1,Ein2}, Griffiths \cite{Gri},   Lu-Yau \cite{L-Y},   Siu \cite{Siu87, S2002},  
Siu-Yeung \cite{SY},   and see also  \cite{HY, Lu, Nadel, No,  ru}, etc.
 
In this paper,  we   consider  the algebraic degeneracy of holomorphic curves into complex projective varieties in a viewpoint of   meromorphic vector fields.
 The  technical line  is to establish a Second Main Theorem type inequality of Nevanlinna theory.

To begin with, we  introduce the main results of the paper, some notations will be illustrated later.  Let $X$ be a smooth complex projective variety with a holomorphic line bundle $E$ over $X.$
 Consider  $f:\mathbb C^m\rightarrow X,$ a  holomorphic mapping into $X$ with $\dim_{\mathbb C}X=n>m.$ Then, $f$ induces a tangent mapping $f_* : T_{\mathbb C^m}\rightarrow T_{X}$
 between holomorphic tangent bundles. The first main result of the paper  is  the following  Second Main Theorem type inequality
 \begin{theoremnn}[=Theorem \ref{mainthm}]\label{abc}
Let $L$ be an ample line bundle over $X.$
 Set
 $$\mathfrak Z_j=f_*\Big(\frac{\partial}{\partial z_j}\Big), \ \
 j=1,\cdots,m.$$
Let  $\mathfrak X_1,$ $\cdots,\mathfrak X_{n-m}$ be linearly almost-independent meromorphic vector fields over $X,$ and let $\mathfrak X_1\wedge \cdots\wedge \mathfrak X_{n-m}$ be
of  pole order at most $E,$ i.e.,  there exists a nonzero  holomorphic section
$t: X\rightarrow E$ such that $t\otimes \mathfrak X_1\wedge\cdots \wedge \mathfrak X_{n-m}$ is holomorphic.
Assume  that 
  $\mathfrak Z_{1f}\wedge\cdots\wedge \mathfrak Z_{mf}\wedge \varphi\not\equiv 0$
with  $\varphi=(t\otimes \mathfrak X_1\wedge\cdots \wedge \mathfrak X_{n-m})_f.$
If $f(\mathbb C^m)\nsubseteq F^{-1}_\varphi(0)$, then
\begin{equation*}
T_f(r,K_X)+ N_{f, {\rm Ram}}(r)\leq_{\rm exc} T_f(r,E)+ O\Big(\log^+T_f(r,L)+\log^+\log r\Big).
\end{equation*}
\end{theoremnn}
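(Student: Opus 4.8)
The plan is to realize the wedge
\begin{equation*}
W:=\mathfrak{Z}_{1f}\wedge\cdots\wedge\mathfrak{Z}_{mf}\wedge\varphi
\end{equation*}
as the pullback under $f$ of a single holomorphic object and then run the standard First-Main-Theorem-plus-logarithmic-derivative scheme on it. Since $\mathfrak{Z}_{1f}\wedge\cdots\wedge\mathfrak{Z}_{mf}=\Lambda^{m}f_{*}\big(\partial/\partial z_{1}\wedge\cdots\wedge\partial/\partial z_{m}\big)$ is a section of $\Lambda^{m}f^{*}T_{X}$, while $\varphi$ is a section of $f^{*}\big(E\otimes\Lambda^{n-m}T_{X}\big)$, their exterior product $W$ is a holomorphic section of $f^{*}\big(E\otimes\Lambda^{n}T_{X}\big)=f^{*}\big(E\otimes K_{X}^{-1}\big)$ over $\mathbb{C}^{m}$; this is exactly Stoll's Jacobian section attached to the data $(f;\mathfrak{X}_{1},\dots,\mathfrak{X}_{n-m};t)$. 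The two hypotheses $\mathfrak{Z}_{1f}\wedge\cdots\wedge\mathfrak{Z}_{mf}\wedge\varphi\not\equiv0$ and $f(\mathbb{C}^{m})\nsubseteq F^{-1}_{\varphi}(0)$ guarantee that $W\not\equiv0$, so $\log\|W\|$ is locally integrable and Jensen's formula is available.

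First I would fix Hermitian metrics on $E$, $K_{X}$ and $L$ and apply the Poincar\'e--Lelong formula to $W$, which reads
\begin{equation*}
dd^{c}\log\|W\|^{2}=[\mathrm{div}\,W]+f^{*}c_{1}(K_{X})-f^{*}c_{1}(E)
\end{equation*}
as currents on $\mathbb{C}^{m}$, where $c_{1}(\cdot)$ denotes the induced Chern forms. Integrating against $(dd^{c}|z|^{2})^{m-1}$ with the Nevanlinna averaging operator and invoking the definitions of the characteristic and counting functions produces the exact identity
\begin{equation*}
T_{f}(r,K_{X})+N_{W}(r)=\int_{S(r)}\log\|W\|^{2}\,d\sigma+T_{f}(r,E)+O(1),
\end{equation*}
in which $N_{W}(r)$ is the counting function of the zero divisor of $W$ and $S(r)$ is the sphere of radius $r$.

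It remains to handle the two terms $N_{W}(r)$ and $\int_{S(r)}\log\|W\|^{2}\,d\sigma$. On one side, the zero divisor of $W$ dominates the ramification divisor of $f$: at any point where $\mathrm{rank}\,f_{*}<m$ the factor $\mathfrak{Z}_{1f}\wedge\cdots\wedge\mathfrak{Z}_{mf}$ already vanishes, whence $\mathrm{div}\,W\geq\mathrm{Ram}(f)$ and $N_{W}(r)\geq N_{f,\mathrm{Ram}}(r)$. On the other side, I would embed $X$ into a projective space by a high power of the ample bundle $L$, pass to a reduced representation of $f$, and note that in this representation $\|W\|$ is, up to bounded factors coming from the fixed metrics and from the section $t$, a combination of Jacobian/Wronskian-type determinants in the components of $f$ divided by a power of the norm of $f$. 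Thus $\log^{+}\|W\|$ is controlled by logarithmic derivatives of $f$, and the several-variable logarithmic derivative lemma, measured against $T_{f}(r,L)$, yields
\begin{equation*}
\int_{S(r)}\log\|W\|^{2}\,d\sigma\leq_{\mathrm{exc}}O\Big(\log^{+}T_{f}(r,L)+\log^{+}\log r\Big).
\end{equation*}
Substituting the last two displays into the identity gives the asserted inequality.

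The step I expect to be the main obstacle is this final estimate: producing a clean majorant for $\|W\|$ in a reduced representation and then applying the logarithmic derivative lemma with the correct exceptional-set bookkeeping. Two points need care. First, the meromorphic vector fields $\mathfrak{X}_{i}$ carry poles that are precisely cancelled by $t$, so that only the holomorphic section $\varphi$ enters and $\|W\|$ acquires no spurious growth from these poles. Second, the first derivatives $\partial f_{k}/\partial z_{j}$ buried inside the $\mathfrak{Z}_{jf}$ must be seen to assemble into genuine logarithmic-derivative expressions to which the lemma applies. Verifying both, together with checking that the accumulated error is of the stated order outside a set of finite measure, is where the real work lies.
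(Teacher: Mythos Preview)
Your overall architecture matches the paper: identify the Jacobian section, apply Poincar\'e--Lelong, integrate via Jensen/Green--Jensen, and then bound the boundary integral. Two remarks on details and one on the main divergence.

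First, in this paper $N_{f,\mathrm{Ram}}(r)$ is \emph{defined} as $N(r,(F_\varphi))$, and Theorem~\ref{thm3} gives $(F_\varphi)=(\mathscr Z)$ where $\mathscr Z$ is exactly your $W$; so $N_W(r)=N_{f,\mathrm{Ram}}(r)$ on the nose, not merely $\geq$. Your rank argument is not needed.

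Second, and more substantively, your plan for the boundary term (embed by $L^k$, pass to a reduced representation, invoke the several-variable logarithmic-derivative lemma) is a legitimate route, but it is not what the paper does and it leaves the hardest bookkeeping implicit. The paper instead works intrinsically: it sets $F_\varphi[\omega^n]=\xi\,\alpha^m$ with $\omega=c_1(L,h)>0$, and then factors
\[
\xi=g^2\cdot\frac{f^*\omega^m}{\alpha^m},\qquad F_\varphi[\omega^n]=g^2 f^*\omega^m.
\]
The point is the Schwarz-type inequality of Theorem~\ref{lem2}, which gives $g\le\|\varphi\|$; since $\varphi$ is the lift of the \emph{holomorphic} section $t\otimes\mathfrak X_1\wedge\cdots\wedge\mathfrak X_{n-m}$ on the compact variety $X$, one has $\|\varphi\|\le C$. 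Thus $\int_{\|z\|=r}\log g^2\,\gamma\le O(1)$, and the whole problem reduces to estimating $\int_{\|z\|=r}\log\frac{f^*\omega^m}{\alpha^m}\,\gamma$. That last integral is handled without any projective embedding: writing $f^*\omega\wedge\alpha^{m-1}=\varrho\,\alpha^m$, one checks pointwise that $f^*\omega^m/\alpha^m\le B\varrho^m$, and then concavity of $\log$ together with the Calculus Lemma (two applications of Borel) gives the $O(\log T_f(r,L)+\log^+\log r)$ bound directly. What this buys over your approach is that the contribution of the vector fields $\mathfrak X_i$ is isolated cleanly as a bounded factor $g$, so no Wronskian gymnastics in coordinates are required; your ``bounded factors coming from \dots\ the section $t$'' is exactly this, but the mechanism making it bounded is the Schwarz inequality $g\le\|\varphi\|$ plus compactness of $X$, which you should make explicit.
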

Theorem \ref{abc} yields  the following  degeneracy result
\begin{theoremnn}[=Corollary \ref{Cor1}] 
 Set
 $$\mathfrak Z_j=f_*\Big(\frac{\partial}{\partial z_j}\Big), \ \
 j=1,\cdots,m.$$
Let  $\mathfrak X_1,$ $\cdots,\mathfrak X_{n-m}$ be linearly almost-independent meromorphic vector fields over $X,$ and let $\mathfrak X_1\wedge \cdots\wedge \mathfrak X_{n-m}$ be
of  pole order at most $E.$ If $E^*\otimes K_X$ is pseudo ample,  then every 
holomorphic mapping $f:\mathbb C^m\rightarrow X$  is algebraically degenerate or  satisfied with the equation
  $$\mathfrak Z_{1f}\wedge\cdots\wedge \mathfrak Z_{mf}\wedge (\mathfrak X_{1}\wedge\cdots\wedge \mathfrak X_{n-m})_f= 0.$$
 \end{theoremnn}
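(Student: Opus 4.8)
The plan is to argue by contraposition: assuming that a holomorphic mapping $f:\mathbb C^m\rightarrow X$ is \emph{not} algebraically degenerate and that the equation $\mathfrak Z_{1f}\wedge\cdots\wedge \mathfrak Z_{mf}\wedge (\mathfrak X_{1}\wedge\cdots\wedge \mathfrak X_{n-m})_f= 0$ \emph{fails}, I will derive a contradiction by showing that $f$ must then have logarithmic growth and hence be rational. The whole argument is a deduction from the Second Main Theorem (Theorem \ref{mainthm}), so the work is in verifying its hypotheses and then exploiting the pseudo-ampleness quantitatively.

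First I would check that the two hypotheses of Theorem \ref{mainthm} are met. Since $t$ is a nonzero holomorphic section of $E$ and $\varphi=(t\otimes \mathfrak X_1\wedge\cdots \wedge \mathfrak X_{n-m})_f$, the assumed non-vanishing $\mathfrak Z_{1f}\wedge\cdots\wedge \mathfrak Z_{mf}\wedge (\mathfrak X_{1}\wedge\cdots\wedge \mathfrak X_{n-m})_f\not\equiv 0$ propagates to $\mathfrak Z_{1f}\wedge\cdots\wedge \mathfrak Z_{mf}\wedge \varphi\not\equiv 0$, because tensoring by the nonzero $t$ cannot annihilate the wedge; this supplies the first hypothesis. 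For the second, note that $F^{-1}_\varphi(0)$ is a proper algebraic subvariety of $X$, being cut out by the nonzero section underlying $\varphi$ (the $\mathfrak X_i$ are linearly almost-independent, so $\mathfrak X_1\wedge\cdots\wedge\mathfrak X_{n-m}\not\equiv 0$, and $t\not\equiv 0$). Since $f$ is assumed algebraically non-degenerate, i.e. $\overline{f(\mathbb C^m)}^{\mathrm{Zar}}=X$, we automatically have $f(\mathbb C^m)\nsubseteq F^{-1}_\varphi(0)$.

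With both hypotheses in hand, Theorem \ref{mainthm} yields $T_f(r,K_X)+N_{f,\mathrm{Ram}}(r)\leq_{\mathrm{exc}} T_f(r,E)+O(\log^+T_f(r,L)+\log^+\log r)$. Dropping the non-negative ramification term and using additivity of the characteristic function, $T_f(r,K_X)-T_f(r,E)=T_f(r,E^*\otimes K_X)+O(1)$, I would rearrange this into $T_f(r,E^*\otimes K_X)\leq_{\mathrm{exc}} O(\log^+T_f(r,L)+\log^+\log r)$. Now the hypothesis that $M:=E^*\otimes K_X$ is pseudo ample enters: by Kodaira's lemma a positive multiple of $M$ is the sum of an ample bundle and an effective divisor, whence $T_f(r,L)\leq C\,T_f(r,M)+O(1)$ for a constant $C>0$ and any ample $L$. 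Substituting this comparison gives a self-referential estimate $S(r)\leq_{\mathrm{exc}} O(\log^+S(r)+\log^+\log r)$ with $S(r):=T_f(r,M)$, which by the standard calculus lemma forces $S(r)=O(\log r)$ and therefore $T_f(r,L)=O(\log r)$.

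The final step converts slow growth into degeneracy: a holomorphic map $f:\mathbb C^m\rightarrow X\hookrightarrow\mathbb P^N$ with $T_f(r,L)=O(\log r)$ is rational, so the Zariski closure of its image has dimension at most $m<n$ and is a proper subvariety of $X$, contradicting the assumed algebraic non-degeneracy and completing the proof. I expect the main obstacle to be precisely this last implication: for transcendental $f$ the Zariski closure of $f(\mathbb C^m)$ can be all of $X$ even though $m<n$, so the argument must genuinely use that logarithmic growth excludes transcendence. The other delicate point is the pseudo-ampleness comparison, where Kodaira's lemma must be applied inside the characteristic-function formalism with care that the effective part contributes only a non-negative term.
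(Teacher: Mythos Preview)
Your approach is essentially the same as the paper's: apply Theorem \ref{mainthm}, use pseudo-ampleness of $E^*\otimes K_X$ to bound $T_f(r,H)$ below by $T_f(r,E^*\otimes K_X)$ for a very ample $H$ (the paper does this via Lemma \ref{SLang87lem5.2} and the First Main Theorem, which is exactly your ``Kodaira's lemma'' step), and then obtain a self-referential growth bound forcing $T_f(r,H)=O(\log r)$. Your final step---that logarithmic growth implies $f$ is rational, hence its image has dimension $\le m<n$---is correct and in fact spells out what the paper leaves implicit in the sentence ``which contradicts Theorem \ref{mainthm}.''

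One small correction: you write that $F_\varphi^{-1}(0)$ is a proper algebraic subvariety of $X$. This is not right: $F_\varphi$ is a Jacobian section over $\mathbb C^m$, so $F_\varphi^{-1}(0)\subseteq\mathbb C^m$, not $X$. (The hypothesis ``$f(\mathbb C^m)\nsubseteq F_\varphi^{-1}(0)$'' in Theorem \ref{mainthm} is an awkward phrasing in the paper; by Theorem \ref{thm3} it amounts to $F_\varphi\not\equiv 0$, which is already implied by $\mathfrak Z_{1f}\wedge\cdots\wedge\mathfrak Z_{mf}\wedge\varphi\not\equiv 0$.) What you do need from algebraic non-degeneracy at this stage is that $f(\mathbb C^m)\not\subseteq t^{-1}(0)$, so that $t_f\not\equiv 0$ and the passage from $(\mathfrak X_1\wedge\cdots\wedge\mathfrak X_{n-m})_f$ to $\varphi$ does not kill the wedge; and later, as you note, that $f(\mathbb C^m)$ avoids the support of the effective divisor in the Kodaira decomposition. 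Both are immediate once $f$ is assumed Zariski-dense, so the argument goes through.
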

To more precisely, if $\dim_{\mathbb C} X=2,$  then we obtain 
 \begin{theoremnn}[=Theorem \ref{ddc}]\label{tm3}
Assume that  $\dim_{\mathbb C}X=2.$ Let $\mathfrak X$ be a nonzero meromorphic vector field of  pole order at most $E$  over $X.$  If $E^*\otimes K_X$ is pseudo ample, 
 then $X$  contains  no  algebraically non-degenerate holomorphic curves. 
\end{theoremnn}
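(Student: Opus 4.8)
The plan is to combine Corollary~\ref{Cor1} with the Nevanlinna theory of entire curves tangent to a foliation on a surface. Since $\dim_{\mathbb{C}}X=2$ and we are looking at curves, we are in the case $m=1$, $n-m=1$, so the single nonzero field $\mathfrak{X}$ is precisely the data Corollary~\ref{Cor1} requires (for one field, ``linearly almost-independent'' means only $\mathfrak{X}\not\equiv 0$). Arguing by contradiction, I would suppose $f:\mathbb{C}\to X$ is algebraically non-degenerate. Applying Corollary~\ref{Cor1} to $\mathfrak{X}$, the non-degeneracy of $f$ rules out the first alternative and forces
\[
\mathfrak{Z}_{1f}\wedge\mathfrak{X}_f=0,\qquad \mathfrak{Z}_1=f_*\Big(\frac{\partial}{\partial z}\Big).
\]
Hence $f'(\zeta)$ is proportional to $\mathfrak{X}_{f(\zeta)}$ wherever the latter is defined and nonzero: the curve $f$ is an integral curve of $\mathfrak{X}$, and so is tangent to the singular holomorphic foliation $\mathcal{F}$ that $\mathfrak{X}$ defines on $X$.

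Next I would record the positivity hypothesis in foliated terms. The pole bound provides a nonzero holomorphic twisted field $s=t\otimes\mathfrak{X}\in H^0(X,T_X\otimes E)$; factoring out the effective divisorial part $D\ge 0$ of its zero locus leaves a field with isolated zeros defining the same $\mathcal{F}$, whence $T_\mathcal{F}=E^*\otimes\mathcal{O}(D)$. The sequence $0\to T_\mathcal{F}\to T_X\to N_\mathcal{F}\to 0$ gives $N_\mathcal{F}=K_X^*\otimes T_\mathcal{F}^*$, so the conormal bundle is
\[
N_\mathcal{F}^*=K_X\otimes T_\mathcal{F}=E^*\otimes K_X\otimes\mathcal{O}(D).
\]
As $D$ is effective, $N_\mathcal{F}^*$ is $E^*\otimes K_X$ twisted by an effective divisor; since $E^*\otimes K_X$ is pseudo ample (big) by hypothesis, $N_\mathcal{F}^*$ is big as well.

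The decisive step, and the one I expect to be the main obstacle, is to exclude a Zariski-dense entire curve tangent to $\mathcal{F}$ once $N_\mathcal{F}^*$ is big. Here I would invoke the theory of Nevanlinna currents for foliated surfaces (Brunella, McQuillan): for a Zariski-dense entire curve $f$ tangent to $\mathcal{F}$, the associated Ahlfors--Nevanlinna current $T$ satisfies $T\cdot N_\mathcal{F}\ge 0$, equivalently
\[
T_f(r,N_\mathcal{F}^*)\le_{\mathrm{exc}} S(r),\qquad S(r)=O\big(\log^+T_f(r,L)+\log^+\log r\big).
\]
On the other hand, the bigness of $N_\mathcal{F}^*$ together with the algebraic non-degeneracy of $f$ forces $T_f(r,N_\mathcal{F}^*)\ge_{\mathrm{exc}} c\,T_f(r,L)$ for some constant $c>0$ and an ample $L$, and $T_f(r,L)\to\infty$ faster than any admissible $S(r)$. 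These two estimates are incompatible, so no algebraically non-degenerate $f$ can exist, which is the assertion.

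The subtleties are all concentrated in the foliated inequality. Carrying it out rigorously requires first reducing $\mathcal{F}$ to a model with at worst reduced singularities, then controlling the contribution of the singular points of $\mathcal{F}$ through which $f$ passes (these enter an Ahlfors--Schwarz/tautological estimate as a truncated counting term), and finally checking that a Zariski-dense $f$ produces a nonzero current whose pairing with the big class $N_\mathcal{F}^*$ is strictly positive. If one wished to stay closer to the paper's own machinery, the same bound $T_f(r,N_\mathcal{F}^*)\le_{\mathrm{exc}} S(r)$ could instead be sought directly from the tautological inequality for the lift of $f$ tangent to $\mathcal{F}$, with the bigness of $E^*\otimes K_X$ again supplying the contradiction.
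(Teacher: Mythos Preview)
Your opening is the same as the paper's: invoke Corollary~\ref{Cor1} to reduce to the tangency equation $\mathfrak Z_f\wedge\mathfrak X_f=0$. After that the two arguments part ways. The paper stays entirely elementary: on each chart $(V;w_1,w_2)$ of a finite cover it writes $t\otimes\mathfrak X=A_1\partial_{w_1}+A_2\partial_{w_2}$, observes that tangency becomes the ODE $A_1\,df_2+A_2\,df_1=0$, produces a holomorphic first integral $\Phi(w_1,w_2)$ by the integrating-factor method, and concludes $\Phi(f_1,f_2)=C_0$, so $f(\mathbb C)\cap V$ sits in a one-dimensional analytic set; finiteness of the cover then gives degeneracy. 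No Nevanlinna currents, no resolution of singularities, no McQuillan.

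Your route through foliated Ahlfors currents is far heavier, and the decisive inequality you quote is not the standard one. McQuillan's tautological inequality for an entire curve tangent to $\mathcal F$ controls $T_f(r,K_{\mathcal F})=T_f(r,T_{\mathcal F}^*)$, not $T_f(r,N_{\mathcal F}^*)$; the statement ``$T\cdot N_{\mathcal F}\ge 0$'' is not what one gets from the lemma on the logarithmic derivative applied along leaves. Since $N_{\mathcal F}^*=K_X\otimes K_{\mathcal F}^{-1}$, your desired bound $T_f(r,N_{\mathcal F}^*)\le S(r)$ is equivalent to $T_f(r,K_X)\le T_f(r,K_{\mathcal F})+S(r)$, which is the \emph{opposite} direction to McQuillan's estimate. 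So to make your approach go through you would have to run the full McQuillan program (resolve $\mathcal F$, classify according to the Kodaira dimension of $K_{\mathcal F}$, treat each case), not just cite a single pairing inequality. That machinery does ultimately yield the conclusion, but it is vastly out of proportion with the paper's two-line ODE argument, and the specific shortcut you propose is where the gap lies.
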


Finally, we  generalize a theorem of  Siu \cite{S2002} (or see Nadel \cite{Nadel}). 
Let $\mathscr D$ be a meromorphic connection on $T_X$ with
pole order at most $E,$ i.e., $t\otimes\mathscr D$ is holomorphic for a nonzero section $t\in
H^0(X,E).$ We obtain 
 \begin{theoremnn}[=Theorem \ref{Siu-thm-Nst21}]\label{sxx}
Let $f:\mathbb{C}\rightarrow X$ be a  
holomorphic curve.
 If $E^{-n(n-1)/2}\otimes K_X$ is pseudo ample, 
then  the image of $f$ is  contained in the pole divisor $(t)$
of  $\mathscr D.$ 
\end{theoremnn}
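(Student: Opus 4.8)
The plan is to imitate the proof of Theorem \ref{abc}, replacing the globally given fields $\mathfrak X_i$ by the covariant jets that the connection $\mathscr D$ manufactures along the curve. Write $\xi_0=f_*(\partial/\partial z)$ and set $\xi_k=\mathscr D_{\xi_0}\xi_{k-1}$ for $k\geq 1$; these are meromorphic sections of $f^*T_X$, holomorphic over $\mathbb C\setminus f^{-1}((t))$. Since $t\otimes\mathscr D$ is holomorphic, each application of $\mathscr D$ raises the pole order by at most one copy of $(t\circ f)$ (the derivative of an order-$j$ pole has order $j+1$, and the Christoffel term contributes another copy of $(t)$), so $\xi_k$ has pole order at most $k\,(t\circ f)$. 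I then form the covariant Wronskian
$$\mathfrak W=\xi_0\wedge\xi_1\wedge\cdots\wedge\xi_{n-1},$$
a meromorphic section of $f^*\wedge^n T_X=f^*K_X^{-1}$ whose pole order is at most $(0+1+\cdots+(n-1))(t\circ f)=\tfrac{n(n-1)}{2}(t\circ f)$. Hence $\sigma:=(t\circ f)^{n(n-1)/2}\otimes\mathfrak W$ is a \emph{holomorphic} section of $f^*\big(E^{\otimes n(n-1)/2}\otimes K_X^{-1}\big)=f^*L'^{-1}$, where $L':=E^{-n(n-1)/2}\otimes K_X$ is exactly the pseudo ample bundle of the hypothesis. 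This accumulation of poles is what produces the exponent $n(n-1)/2$.

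Assume $f$ is nonconstant with $f(\mathbb C)\not\subseteq(t)$, aiming at a contradiction. If $\mathfrak W\equiv 0$, let $p<n$ be maximal with $\xi_0\wedge\cdots\wedge\xi_{p-1}\not\equiv 0$; then $\xi_p$ lies in $\mathrm{span}(\xi_0,\ldots,\xi_{p-1})$ off a discrete set, so $f$ is a covariantly flat integral curve of a rank-$p$ meromorphic distribution, which lets me restrict $\mathscr D$ and repeat the argument in lower dimension. The essential case is therefore $\mathfrak W\not\equiv 0$, i.e. $\sigma\not\equiv 0$. Here $\sigma$ is precisely a generalized Wronskian (Stoll's Jacobian-section situation) underlying Theorem \ref{abc}; transcribing the logarithmic-derivative estimate of that proof with $E$ replaced by $E^{\otimes n(n-1)/2}$ yields
$$T_f(r,K_X)+N_{f,{\rm Ram}}(r)\leq_{\rm exc}\frac{n(n-1)}{2}\,T_f(r,E)+O\big(\log^+T_f(r,L)+\log^+\log r\big),$$
that is, $T_f(r,L')\leq_{\rm exc}O\big(\log^+T_f(r,L)+\log^+\log r\big)$.

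Since $L'=E^{-n(n-1)/2}\otimes K_X$ is pseudo ample, its characteristic $T_f(r,L')$ dominates the right-hand error term for every nonconstant $f$ whose image is not absorbed by the base locus of $|kL'|$; the displayed inequality is then impossible, and the contradiction forces $f(\mathbb C)\subseteq(t)$. The main obstacle is twofold. First, the degenerate case $\mathfrak W\equiv 0$: I must verify that vanishing of the covariant Wronskian really confines $f$ either to $(t)$ or to a lower-dimensional invariant subvariety on which the induction applies, which needs the rank-$p$ jet distribution to be meromorphic of controlled pole with algebraic integral leaves. Second, and more delicate, is upgrading the Nevanlinna growth bound into the sharp geometric statement $f(\mathbb C)\subseteq(t)$: one must check that the only locus where the Wronskian construction can fail—equivalently, the only place the estimate escapes contradiction—is the pole divisor itself, ruling out escape into the augmented base locus of $L'$. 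I expect this identification of the exceptional locus with $(t)$, together with the bookkeeping of pole orders under iterated covariant differentiation, to be the crux; the Nevanlinna estimate proper is a direct adaptation of the proof of Theorem \ref{abc}.
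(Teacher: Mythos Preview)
Your approach is essentially the paper's. The paper does not give a self-contained proof of Theorem~\ref{Siu-thm-Nst21}; it first establishes the Second Main Theorem inequality (Theorem~\ref{mainthm2}) using exactly the covariant-jet Wronskian you form --- the paper writes $f^{(k+1)}=(t\otimes\mathscr D_{f'})^kf'$ and $\varphi=(f^{(2)}\wedge\cdots\wedge f^{(n)})_f$, which is your $\sigma$ with the $t$-powers already absorbed --- and deduces from pseudo-ampleness the dichotomy of Corollary~\ref{Siu-thm-Nst2}: either $f(\mathbb C)\subseteq(t)$ or $f$ is autoparallel. For the removal of the autoparallel alternative the paper simply invokes ``the techniques of Siu~\cite{Siu87} and Nadel~\cite{Nadel}'' without details. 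So your non-degenerate case ($\mathfrak W\not\equiv 0$) reproduces Theorem~\ref{mainthm2} and Corollary~\ref{Siu-thm-Nst2} verbatim, and your $n(n-1)/2$ pole-order bookkeeping is exactly the reason that exponent appears.

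Where you are candid about gaps, the paper is equally terse. Your second ``obstacle'' --- that pseudo-ampleness alone might allow $f$ to escape into the stable base locus of $L'$ rather than into $(t)$ --- is a genuine subtlety that the paper also does not address when passing from Theorem~\ref{mainthm2} to Corollary~\ref{Siu-thm-Nst2} (compare the weaker conclusion ``algebraically degenerate'' in Corollary~\ref{Cor1}). Your first obstacle, the autoparallel case, is the substantive one: your idea of passing to the maximal $p$ with $\xi_0\wedge\cdots\wedge\xi_{p-1}\not\equiv0$ and viewing $f$ as an integral curve of a rank-$p$ distribution is indeed in the spirit of Siu--Nadel, but the step ``restrict $\mathscr D$ and repeat in lower dimension'' is not automatic --- the span of the $\xi_j$ need not come from an integrable algebraic distribution on $X$. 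In the references the paper cites, this is handled either by a direct Schwarz-lemma/curvature argument (Siu) or, in the surface case, by showing that autoparallel curves are integral curves of a holomorphic vector field away from $(t)$ and analysing those explicitly (Nadel). You should expect to import one of those arguments rather than a bare induction on dimension.
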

 
\section{Jacobian sections}

We   will generalize the notion of  \emph{Jacobian sections} introduced by Stoll (cf. \cite{HY, Stoll}),
which  is useful to the proofs of our 
 main theorems.
In this section,  $M,X$ denote   complex manifolds of complex dimension $m, n$ respectively. Let 
$S$ be an analytic subset of $M,$ we say that $S$ is \emph{thin} if $M\setminus S$ is dense in $M.$ 

\subsection{Jacobian bundles}~

  Let
$$f:M\rightarrow X$$
 be a
holomorphic mapping.
Given a holomorphic vector bundle $\pi:E\rightarrow X,$   the {\it
pull-back bundle}\index{pullback bundle}
$\tilde{\pi}:f^*E\rightarrow M$ is naturally defined up to an
isomorphism $\tilde f: f^*E_x\rightarrow E_{f(x)}$ for every $x\in M$
such that $f\circ\tilde{\pi}=\pi \circ\tilde{f}.$
Or equivalently,
 $$ f^*E=\Big\{(x,\xi)\in M\times E: \ f(x)=\pi(\xi)\Big\} $$
satisfying  that $\tilde{\pi}(x,\xi)=x$ and $\tilde{f}(x,\xi)=\xi.$  Let $H^0(X,E)$ stand for the space of all holomorphic sections of $E$ over $X$. For $s\in H^0(X,E)$,  a {\it lifted
section}\index{lifted section} $s_f\in H^0(M, f^*E)$ of $s$
by $f$ is  defined  by
\begin{equation}\label{lifted section}
 s_f(x)=\tilde{f}^{-1}\big(s\circ f(x)\big), \ \  ^\forall x\in M.
 \end{equation}
 If $s\not\equiv0$ with  $f(M)\not\subseteq s^{-1}(0),$  then  $s_f\not\equiv 0.$ So,  the \emph{pull-back
divisor} $f^*(s)$ of $(s)$ exists, and the multiplicity functions  of divisors $(s_f)$ and $f^*(s)$ satisfy
 $$\mu_{(s_f)}\geq \mu_{f^*(s)}.$$

The {\it Jacobian bundle}\index{Jacobian bundle} of $f$  is defined by
 $$K(f)=K_M\otimes f^*K_X^*,$$
where
$K_X^*$ is the dual of   canonical line bundle $K_X$.  A holomorphic section $F$ of $K(f)$
over $M$ is called a {\it Jacobian section of $f$}.  The section $F$ is said to be  {\it
effective}\index{effective} if $F^{-1}(0)$ is thin. The zero
divisor $(F)$   is called the {\it ramification divisor of $f$ with respect to $F.$}

Given  a Jacobian section $F$ of $f.$   Now we fix a  Hermitian inner product
on $K_X^*\oplus K_X,$ the inner product can
  pull  back to
 $f^*(K_X^*)\oplus f^*(K_X).$
In further, it induces  a $K_M$-valued Hermitian inner product
 $$\langle \cdot ,\cdot\rangle: \ K(f)\oplus f^*K_X\rightarrow K_M$$
 in a natural way.  Let $V$ be an open subset of $X$ such that $\tilde V:=f^{-1}(V)\not=\emptyset.$ The section $F$ defines a linear mapping (denoted  also by $F$)
 $$\ F: \  H^0(V,K_X)\rightarrow H^0(\tilde{V},K_M)$$
by $F[\Psi]=\langle F, \Psi_f\rangle$  for
$\Psi\in H^0(V,K_X)$.

Take an open subset $W$ of $X$ with $\tilde{W}:=f^{-1}(W)\not=\emptyset.$  Let  $(U; z_1,...,z_m),$   $(V; w_1,...,w_n)$
be holomorphic coordinate charts of $M, X$ respectively such that
$V\subseteq W$ and $f(U)\subseteq V.$  For $\Psi\in H^0(W,K_X),$ we write $\Psi|_V=\tilde {\Psi}dw$  and
$$F|_U=\tilde{F}dz\otimes d^*w_f,$$  where $\tilde{\Psi}, \tilde{F}$ are holomorphic functions on $V, U$ respectively,
and 
 $$dz=dz_1\wedge\cdots\wedge dz_m,\quad
 d^*w=\frac{\partial}{\partial w_1}\wedge\cdots\wedge \frac{\partial}{\partial
 w_n}.$$
By definition, it yields  that 
 $$F[\Psi]\big|_U=\tilde{\Psi}\circ f\cdot F[dw]=\tilde{\Psi}\circ f\cdot \tilde{F}dz.$$
 Let $A^{2n}(W)$ denote the set of smooth $2n$-forms on $W.$  We  
extend  $F$ linearly to
 $$F: \ A^{2n}(W)\rightarrow A^{2m}(\tilde{W})$$
in  a natural way: if $\Omega\in A^{2n}(W)$  is
 expressed  locally as
 $\Omega|_V=i_n\rho dw\wedge d\bar{w},$ where 
 $$i_p=\Big(\frac{\sqrt{-1}}{2\pi}\Big)^p(-1)^{\frac{p(p-1)}{2}} p!, \ \ ^\forall p\geq1,$$
 then 
$$ F[\Omega]\big|_U=i_m\rho\circ
 f\cdot F[dw]\wedge\overline{F[dw]}=i_m\rho\circ
 f\cdot |\tilde{F}|^2dz\wedge d\bar{z}.$$
Clearly,  we have
$$F[g_1\Omega_1+g_2\Omega_2]=g_1\circ f\cdot F[\Omega_1]+g_2\circ f\cdot F[\Omega_2]$$
for  $g_1,g_2\in A^0(W),$  $\Omega_1,\Omega_2\in A^{2n}(W).$
Let $\kappa$ be  a Hermitian metric on $K(f),$ then 
$\|F\|^2_\kappa=\tilde\kappa|\tilde{F}|^2$ with $\tilde\kappa|_U=\|dz\otimes
d^*w_f\|^2_\kappa.$ There is  a form $\Theta\in A^{2m}(\tilde{W})$
such that
\begin{equation*}
F[\Omega]=\|F\|^2_\kappa\Theta
\end{equation*}
with
$$\Theta|_U=i_m\rho\circ
 f\cdot\tilde\kappa^{-1}dz\wedge d\bar{z}.$$
Obviously, $\Theta>0$ if and only if $\Omega>0$.

In what follows, we  give an extension of Jacobian sections. 
  \begin{defi}\label{def}
Let $E$ be a holomorphic line bundle over $X.$ A holomorphic section $F_E$ of $f^*E\otimes K(f)$
over $M$ is called  a {\it Jacobian
section of $f$ with respect to $E$.}  We say that $F_E$ is  {\it
effective}\index{effective} if $F^{-1}_E(0)$  is thin. The zero
divisor $(F_E)$ is called the {\it ramification
divisor of $f$  with respect to $F_E$.}
  \end{defi}\label{def}
Let  $F_E$  be a Jacobian section of $f$ with respect to $E,$ we define   a   $f^*E\otimes K_M$-valued  interior product
 $$\langle \cdot,\cdot\rangle: \ \big(f^*E\otimes K(f)\big)\oplus f^*K_X\rightarrow f^*E\otimes K_M$$
induced  from  the natural    interior  product
on $K_X^*\oplus K_X.$
 Then $F_E$  defines  a linear mapping
 $$F_E: \ H^0(V, K_X)\rightarrow H^0(\tilde{V},f^*E\otimes K_M)$$
by $F_E[\Psi]=\langle F_E, \Psi_f\rangle$ for
$\Psi\in\Gamma(V, K_X)$. Let $\sigma$ be a local holomorphic frame of $E$ restricted to $V,$  and  write $F_E$ as
$$F_E|_U=\tilde{F}_E\sigma_f\otimes dz\otimes d^*w_f.$$  Let $\Psi\in H^0(W,K_X)$ with the  expression given  before,  then we have 
 $$F_E[\Psi]\big|_U=\tilde{\Psi}\circ f\cdot \sigma_f\otimes F_E[dw]=\tilde{\Psi}\circ f \cdot\tilde{F}_E\cdot\sigma_f\otimes dz.$$
Equip $E$ with  a Hermitian metric.  Similarly,  $F_E$  can  extend linearly to
 $$F_E: \ A^{2n}(W)\rightarrow A^{2m}(\tilde{W})$$
by  
\begin{equation}\label{qq}
 F_E[\Omega]\big|_U=i_m\rho\circ
 f\cdot \|\sigma_f\|^2\cdot|\tilde{F}_E|^2dz\wedge d\overline{z}
\end{equation}
for  $\Omega|_V=i_n\rho dw\wedge d\overline{w}\in A^{2n}(W).$
If $\kappa_E$ is  a Hermitian metric on  $f^*E\otimes K(f),$ then 
$\|F_E\|^2_{\kappa_E}=\tilde\kappa_E|\tilde{F}_E|^2$  with  $\tilde\kappa_E|_U=\|\sigma_f\otimes dz\otimes
d^*w_f\|^2.$ Also, there exists a form $\Theta\in A^{2m}(\tilde{W})$
such that
$$F_E[\Omega]=\|F_E\|^2_{\kappa_E}\Theta$$
with
$$\Theta|_U=i_m\rho\circ
 f\cdot\tilde\kappa_E^{-1}dz\wedge d\bar{z}.
$$
It is clear   that  $\Theta>0$ if and only if $\Omega>0$.

\subsection{Holomorphic fields} ~

 Let $E$  be a holomorphic line bundle over $X.$ Now we begin with  the notion of ``\emph{linearly almost-independent}" and ``\emph{pole order at most $E$}" for 
 vector fields.
  \begin{defi}\label{def}
 Let  $\mathfrak X_1,\cdots, \mathfrak X_k$  be meromorphic vector fields over $X.$
 We say that $\mathfrak X_1, \cdots, \mathfrak X_k$  are linearly almost-independent if 
 $\mathfrak X_1\wedge\cdots\wedge \mathfrak X_k\not\equiv0.$
We say that $\mathfrak X_1\wedge\cdots\wedge \mathfrak X_k$  is of   pole order at most $E$,  if  there exists a nonzero holomorphic section $t: X\rightarrow E$  such that
$t\otimes \mathfrak X_1\wedge\cdots\wedge \mathfrak X_k$
 is holomorphic.  For such $t,$ we have
$$
  \big(t\otimes \mathfrak X_1\wedge\cdots\wedge \mathfrak X_k\big)_f\in H^0\Big(M,f^*\big(E\otimes\bigwedge^{k}T_X\big)\Big).
$$
 \end{defi}
 Assume that $m<n$ and set
$q:=n-m.$ Apparently,  there exists a unique homomorphism
 $$\hat{f}: \ f^*\Big(E\otimes\bigwedge^{m}T^*_X\Big)\rightarrow
 f^*E\otimes\bigwedge^{m}T^*_M$$
 such that $\hat{f}(\xi_f)=f^*\xi$ for all $\xi\in
H^0\left(V,E\otimes\wedge^{m}T^*_X\right),$ where $V$ is  open  in $X$ with
$\tilde{V}:=f^{-1}(V)\not=\emptyset.$
Recall that  the interior
product
 $$\angle: \ K_X\oplus \Big(E\otimes \bigwedge^{q}T_X\Big)\rightarrow E\otimes \bigwedge^{m}
 T_X^*$$
is  defined by $\theta(\alpha\angle \xi)=(\xi\wedge\theta)(\alpha)\in E$ for every $\theta\in \bigwedge^{m}
 T_X,$ where $\alpha\in K_X$ and
 $\xi\in E\otimes \bigwedge^{q}T_N.$
 The interior product
  pulls back to an interior product
 $$\angle: \ f^*K_X\oplus f^*\Big(E\otimes\bigwedge^{q}T_X\Big)\rightarrow
 f^*\Big(E\otimes\bigwedge^{m}T^*_X\Big).$$
 
We call  $\varphi\in
H^0\left(M,f^*\left(E\otimes\bigwedge^{q}T_X\right)\right)$
 a {\it holomorphic field of
$f$ over $M$ of degree $q$ with respect to $E.$}  The section  $\varphi$ defines  a Jacobian
section $F_\varphi$ of $f$ with respect to $E$  by
 $$F_\varphi|_{\tilde{V}}=\hat{f}(\Psi_f\angle\varphi)\otimes
 \Psi_f^*,$$
 where $\Psi\in H^0(V,K_X)$ vanishes nowhere on $V,$  and $\Psi^*$ denotes
the dual  of $\Psi.$ Equivalently,  $F_\varphi$ is described by
 $F_\varphi[\Psi]= \hat{f}(\Psi_f\angle \varphi)$
  for every $\Psi\in H^0(V,K_X)$ with  $\tilde{V}\not=\emptyset$ for $V$ open in $X.$
 We say that $\varphi$ is 
  {\it effective}\index{effective} if $F_\varphi$ is
effective.

Let us  treat the existence of effective holomorphic fields of a holomorphic mapping $f: M\rightarrow X$ under the dimension condition $q:=n-m>0.$ Fix an integer $k$ such that $1\leq k\leq n,$ and 
 denote by $J_{1,k}^n$  be  the set of  all increasing injective mappings
\begin{equation*}
    \lambda: \ \mathbb{Z}[1,k]\longrightarrow \mathbb{Z}[1,n],
\end{equation*}
where $\mathbb{Z}[r,s]$ ($r\leq s$) denotes  the set of  integers $j$ with  $r\leq j\leq s$.
If $\lambda\in J_{1,k}^n,$ then $\lambda^{\bot}$ is uniquely defined such that $(\lambda, \lambda^{\bot})$ is a permutation of $\{1, \cdots,n\}.$ Clearly,
 $\bot: J_{1,k}^n\rightarrow  J_{1, n-k}^n$ is a bijective mapping.

 \begin{lemma}\label{lem0}  Assume $M$ is stein. Then a holomorphic field $\varphi$ of $f$ over $M$ of
 degree $q$ with respect to $E$ exists such that $\varphi$ is effective if and only if $f$ is differentiably non-degenerate.
\end{lemma}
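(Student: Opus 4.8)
The plan is to reduce the assertion to an elementary statement about the $m\times m$ Jacobian minors of $f$, and then to feed in the Stein hypothesis through Cartan's Theorem A.

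First I would compute $\tilde F_\varphi$ in local coordinates. Fix charts $(U;z_1,\dots,z_m)$ and $(V;w_1,\dots,w_n)$ with $f(U)\subseteq V$, a holomorphic frame $\sigma$ of $E|_V$, write $f_l=w_l\circ f$, and expand
$$\varphi|_U=\sigma_f\otimes\sum_{\mu\in J_{1,q}^n}a_\mu\Big(\tfrac{\partial}{\partial w_{\mu(1)}}\wedge\cdots\wedge\tfrac{\partial}{\partial w_{\mu(q)}}\Big)_f,\qquad a_\mu\in\mathcal O(U).$$
Taking the nowhere-vanishing $\Psi=dw$ and unravelling the interior product $\angle$, one gets $dw\angle\big(\partial/\partial w_{\mu(1)}\wedge\cdots\wedge\partial/\partial w_{\mu(q)}\big)=\varepsilon_\mu\,dw_{\mu^\perp(1)}\wedge\cdots\wedge dw_{\mu^\perp(m)}$, where $\varepsilon_\mu=\pm1$ is the sign of the permutation $(\mu,\mu^\perp)$; applying $\hat f$ then pulls this back to $\varepsilon_\mu J_{\mu^\perp}(f)\,dz$, with $J_\nu(f)=\det\big(\partial f_{\nu(i)}/\partial z_j\big)_{1\le i,j\le m}$ the minor of the Jacobian of $f$ indexed by $\nu\in J_{1,m}^n$. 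Reading this against $F_\varphi|_U=\tilde F_\varphi\,\sigma_f\otimes dz\otimes d^*w_f$ yields the key identity
$$\tilde F_\varphi=\sum_{\nu\in J_{1,m}^n}\varepsilon_{\nu^\perp}\,a_{\nu^\perp}\,J_\nu(f).$$

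Since $f^*E\otimes K(f)$ is a line bundle, $\varphi$ is effective exactly when $F_\varphi\not\equiv0$, i.e. when $\tilde F_\varphi\not\equiv0$ (the zero set of a nonzero section of a line bundle on a connected manifold is a proper analytic subset, hence thin). Necessity is then immediate: if $\tilde F_\varphi\not\equiv0$, at least one minor $J_\nu(f)$ is not identically zero, so $f_*$ has rank $m$ on a dense open set and $f$ is differentiably non-degenerate; this direction uses nothing about $M$.

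For the sufficiency — the substantive half — suppose $f$ is differentiably non-degenerate and choose $p\in M$ together with $\nu_0\in J_{1,m}^n$ such that $J_{\nu_0}(f)(p)\ne0$ in suitable charts about $p$. Here the Stein hypothesis enters: $f^*\big(E\otimes\bigwedge^q T_X\big)$ is a holomorphic vector bundle over the Stein manifold $M$, so by Cartan's Theorem A its sheaf of sections is globally generated, and in particular the fibre evaluation $H^0\big(M,f^*(E\otimes\bigwedge^q T_X)\big)\to f^*\big(E\otimes\bigwedge^q T_X\big)_p$ is surjective. I would therefore take a global section $\varphi$ whose value at $p$ is $\sigma_f(p)\otimes(\partial/\partial w_{\nu_0^\perp})_f(p)$, i.e. $a_{\nu_0^\perp}(p)=1$ and $a_\mu(p)=0$ for $\mu\ne\nu_0^\perp$. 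The identity above then gives $\tilde F_\varphi(p)=\varepsilon_{\nu_0^\perp}J_{\nu_0}(f)(p)\ne0$, whence $F_\varphi\not\equiv0$ and $\varphi$ is effective.

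The main obstacle is the coordinate identity for $\tilde F_\varphi$: the sign bookkeeping in the interior product $\angle$ and in the passage between complementary index sets $\nu\leftrightarrow\nu^\perp$ must be handled carefully so that $F_\varphi$ is displayed as a fixed linear combination of the $m\times m$ Jacobian minors of $f$. Once this identity and the characterization ``$f$ differentiably non-degenerate $\iff$ some $J_\nu(f)\not\equiv0$'' are in hand, the only analytic ingredient is the Stein hypothesis, used solely through Theorem A to realize a prescribed fibre value by a global holomorphic section; this is precisely the step that can fail on a non-Stein $M$, where global holomorphic fields may be too scarce to be effective.
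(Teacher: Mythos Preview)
Your proposal is correct and follows essentially the same route as the paper: both compute $F_\varphi$ locally as a linear combination of the $m\times m$ Jacobian minors $A_\nu$ (your $J_\nu(f)$), deduce necessity from the inclusion of the degeneracy locus in $(F_\varphi)$, and for sufficiency use the Stein hypothesis to produce a global section with a prescribed value at a point where some minor is nonzero. The only cosmetic difference is that the paper builds $\varphi$ as a tensor product $\tau\otimes s_{\iota^\perp}$ from separate global sections of $f^*E$ and $f^*T_X$ (invoking H\"ormander's Corollary 5.6.3), whereas you apply Cartan's Theorem~A directly to the full bundle $f^*(E\otimes\bigwedge^q T_X)$; both are the same global-generation input on a Stein manifold.
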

\begin{proof}
 Let $S$  be the set of  all $x\in M$ such that
the rank of Jacobian  matrix of $f$ at $x$ is smaller than $m.$  Fix  $x_0\in M,$
we take local holomorphic coordinate
charts $(U;z_1,\cdots,z_m)$ of $x_0$ in $M,$ and $(V; w_1,\cdots,w_n)$ of
$f(x_0)$ in $X$ with $f(U)\subseteq V.$ We may assume that $E|_V\cong V\times\mathbb C.$
Set
$$d^*w_j=\frac{\partial}{\partial w_j}, \ \ j=1,\cdots, n.$$
For $\nu\in J^n_{1,m},$  we write
$f^*dw_\nu=A_\nu dz,$
where
 $dw_\nu=dw_{\nu(1)}\wedge\cdots\wedge d_{\nu(m)}$ and $dz=dz_1\wedge\cdots dz_m.$
By  definition of ranks,  it yields  that
$$S\cap U=\bigcap_{\nu\in J^n_{1,m}}A^{-1}_\nu(0).$$
\ \ \ \ \emph{Necessarity.} Assume  that $\varphi$ is an effective field of $f$ over $M$ of
 degree $q$ with respect to $E,$ we  prove that $f$ is differentiably non-degenerate.
 Let $\sigma$ be a local holomorphic
 frame of $E$ restricted to $V,$  we write
$$
\varphi|_U=\sum_{\lambda\in J^n_{1,q}}\varphi_\lambda \sigma_f\otimes d^*w_{\lambda f},
$$
where
$d^*w_{\lambda}=d^*w_{\lambda(1)}\wedge\cdots\wedge d^*w_{\lambda(q)}.$
 A simple computation shows that
$$dw_f\angle\varphi|_U=\sum_{\lambda\in J^n_{1,q}}{\rm sign}(\lambda^\bot, \lambda)
\varphi_\lambda \sigma_f\otimes dw_{\lambda^\bot f}.$$
Hence, we obtain
 $$F_\varphi[dw]\big|_U=
\hat{f}(dw_f\angle\varphi)=\sum_{\lambda\in J^n_{1,q}}{\rm sign}(\lambda^\bot, \lambda)
\varphi_\lambda A_{\lambda^\bot}\sigma_f\otimes dz,
$$
which implies that
$$(F_\varphi)\cap U=\Bigg(\sum_{\lambda\in J^n_{1,q}}{\rm sign}(\lambda^\bot, \lambda)
\varphi_\lambda A_{\lambda^\bot}\Bigg)\supseteq
S\cap U.$$
Since $\varphi$ is effective, then $S$ is thin. Thus, $f$ is differentiably non-degenerate.

\emph{Sufficiency.} Assume  that $f$ is differentiably non-degenerate,  we show that there exists an effective field of $f$ over $M$ of
 degree $q$ with respect to $E.$ 
For 
$x_0\in M\setminus S,$    there exists    $\iota\in J_{1,m}^n$  satisfying
that $A_\iota(x_0)\not=0$.
Since $M$ is stein,   then  by Corollary 5.6.3   in \cite{Ho} we note that  there exists   $\tau\in H^0(M, f^*E)$
and $s_j\in H^0(M, f^*T_X)$  such that
$$\tau(x_0)\not=0; \ \ \ s_j(x_0)=d^*w_{jf}(x_0), \ \ j=1,\cdots,n.$$
Set $s_{\iota}=s_{\iota(1)}\wedge\cdots\wedge s_{\iota(m)},$  then 
$$\tau\otimes s_{\iota^{\bot}}\in H^0\Big(M,f^*E\otimes\bigwedge^{q}{f^*
T_X}\Big).$$
Now define $\varphi:=\tau\otimes s_{\iota^{\bot}},$
 which is a holomorphic
field of $f$ over $M$ of degree $q$ with respect to $E.$ Next, we prove  that $\varphi$ is effective.  Write
$$\varphi|_U=\sum_{\lambda\in J^n_{1,q}}\varphi_\lambda \tau\otimes (d^*w_{\lambda})_f,$$
which is satisfied with 
$$\varphi_\lambda(x_0)=1, \ \ \lambda=\iota^{\bot}; \ \ \ \varphi_\lambda(x_0)=0, \ \ \lambda\not=\iota^{\bot}.$$
So, it yields  that 
$$F_\varphi[dw]=\sum_{\lambda\in J^n_{1,q}}{\rm sign}(\lambda^\bot, \lambda)
\varphi_\lambda A_{\lambda^\bot}\tau\otimes dz
$$
with
$$F_\varphi[dw](x_0)={\rm sign}(\iota, \iota^\bot)A_{\iota}(x_0)\tau(x_0)\otimes dz(x_0)\not=0.
$$
Notice that
$$(F_\varphi[dw])\cap U=(F_\varphi)\cap U,$$
hence $x_0$ is not a zero of $F_\varphi,$ which implies that $ {\rm Supp}(F_\varphi)\subseteq S$ is thin since $S$ is thin. Therefore, $\varphi$ is  effective. We conclude the proof.
 \end{proof}

\begin{theorem}
Assume   $f$ is differentiablely non-degenerate.  Let $\mathfrak X_1,\cdots, \mathfrak X_n$ be linearly almost-independent
 meromorphic vector fields  over $X,$ and let $\mathfrak X_1\wedge\cdots\wedge \mathfrak X_n$ be of pole order at most $E,$ i.e.,  there is
 a nonzero holomorphic section $t: X\rightarrow E$ such that $t\otimes \mathscr X$ is holomorphic with 
$\mathscr X:=\mathfrak X_1\wedge\cdots \wedge \mathfrak X_n.$
 If $f(M)\nsubseteq {\rm Supp}(t\otimes \mathscr X),$
then there is
$\lambda\in J_{1,q}^n$ such that
 $(t\otimes \mathfrak X_{\lambda(1)}\wedge\cdots\wedge
 \mathfrak X_{\lambda(q)})_f$
is an effective holomorphic field of $f$ over $M$ of degree $q$ with respect to $E$.
\end{theorem}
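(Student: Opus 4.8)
The plan is to reduce the assertion to a pointwise non-vanishing statement at a generic point of $M$ and then to settle it by a basis-exchange argument, reusing verbatim the local computation already carried out in the proof of Lemma~\ref{lem0}. First I would fix $x_0\in M\setminus S$ together with holomorphic charts $(U;z_1,\dots,z_m)$, $(V;w_1,\dots,w_n)$ with $f(U)\subseteq V$ and $E|_V$ trivialised by a frame $\sigma$, exactly as in Lemma~\ref{lem0}. For each $\lambda\in J^n_{1,q}$ I set $\varphi^{(\lambda)}:=\big(t\otimes\mathfrak X_{\lambda(1)}\wedge\cdots\wedge\mathfrak X_{\lambda(q)}\big)_f$ and expand $\varphi^{(\lambda)}|_U=\sum_{\mu\in J^n_{1,q}}\varphi^{(\lambda)}_\mu\,\sigma_f\otimes d^*w_{\mu f}$, where the coefficients $\varphi^{(\lambda)}_\mu$ are, up to the local representative of $t$, the $q\times q$ minors of the coefficient matrix of $\mathfrak X_{\lambda(1)},\dots,\mathfrak X_{\lambda(q)}$ on the columns $\mu$.

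Feeding this into the formula established in Lemma~\ref{lem0}, namely $F_{\varphi^{(\lambda)}}[dw]\big|_U=\sum_{\mu}\mathrm{sign}(\mu^\bot,\mu)\,\varphi^{(\lambda)}_\mu A_{\mu^\bot}\,\sigma_f\otimes dz$ with $f^*dw_{\mu^\bot}=A_{\mu^\bot}dz$, one recognises the right-hand side as the coefficient of
\[
t\cdot\Big(\mathfrak X_{\lambda(1)}\wedge\cdots\wedge\mathfrak X_{\lambda(q)}\wedge f_*\tfrac{\partial}{\partial z_1}\wedge\cdots\wedge f_*\tfrac{\partial}{\partial z_m}\Big)
\]
relative to $\partial/\partial w_1\wedge\cdots\wedge\partial/\partial w_n$. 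Thus $\varphi^{(\lambda)}$ is effective precisely when this top-degree multivector does not vanish identically on $M$, which is a condition I can test at a single good point.

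The main step is then purely linear algebra at $f(x_0)$. Since $f(M)\nsubseteq\mathrm{Supp}(t\otimes\mathscr X)$ and $M\setminus S$ is dense, I may choose $x_0$ so that $f$ has rank $m$ at $x_0$ and $(t\otimes\mathscr X)(f(x_0))\neq0$; the latter forces $\mathfrak X_1(f(x_0)),\dots,\mathfrak X_n(f(x_0))$ to be a basis of $T_{f(x_0)}X$, while the rank condition makes $v_i:=f_*\tfrac{\partial}{\partial z_i}(x_0)$, $i=1,\dots,m$, linearly independent. By the Steinitz exchange lemma the independent family $v_1,\dots,v_m$ can be completed to a basis of $T_{f(x_0)}X$ by adjoining $q=n-m$ of the vectors $\mathfrak X_j(f(x_0))$; calling their indices $\lambda\in J^n_{1,q}$, the wedge $\mathfrak X_{\lambda(1)}\wedge\cdots\wedge\mathfrak X_{\lambda(q)}\wedge v_1\wedge\cdots\wedge v_m$ is nonzero. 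Hence $F_{\varphi^{(\lambda)}}(x_0)\neq0$ for this $\lambda$, so $\mathrm{Supp}(F_{\varphi^{(\lambda)}})$ is a proper analytic subset of $M$ and therefore thin; this is exactly effectiveness of $\varphi^{(\lambda)}$.

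The hard part will be verifying that the section so produced is genuinely a \emph{holomorphic} field, i.e.\ that $t\otimes\mathfrak X_{\lambda(1)}\wedge\cdots\wedge\mathfrak X_{\lambda(q)}$ pulls back holomorphically. The hypothesis only bounds the pole order of the \emph{full} wedge $\mathscr X$ by $E$, and a priori a sub-wedge may carry poles that cancel inside $\det$, so that $t$ no longer kills them. I would therefore control, component by component along each polar hypersurface $D$ of the $\mathfrak X_i$, the order $\mathrm{ord}_D$ of the selected $q\times q$ minors against $\mathrm{ord}_D(t)$, exploiting that $(t\otimes\mathscr X)(f(x_0))\neq0$ pins down the relation between $\mathrm{ord}_D(t)$ and $\mathrm{ord}_D(\det)$ on the components actually met by $f(M)$. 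The delicate issue is that the exchange lemma must be exercised \emph{compatibly} with this order estimate—one wants the complementary block $\lambda^\bot$ to absorb the polar directions—and arguing that such a simultaneous choice of $\lambda$ always exists is, I expect, where the essential difficulty of the theorem lies.
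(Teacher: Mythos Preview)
Your effectiveness argument via Steinitz exchange is the paper's argument in different dress. The paper, at the good point $x_0$, factorises the local representative $\tilde t$ of $t$ as $\tilde t_1\cdots\tilde t_n$ so that $\tilde t_1\mathfrak X_1,\dots,\tilde t_n\mathfrak X_n$ form a local holomorphic frame of $T_X$ on $V$, passes to the dual coframe $\omega_1,\dots,\omega_n$, writes $f^*\omega_\nu=B_\nu\,dz$, picks $\iota\in J^n_{1,m}$ with $B_\iota(x_0)\neq0$, and sets $\lambda=\iota^\bot$. Choosing a nonzero $m\times m$ minor of the Jacobian relative to the basis $\mathfrak X_1(f(x_0)),\dots,\mathfrak X_n(f(x_0))$ is exactly Steinitz exchange applied to $v_1,\dots,v_m$; the adapted frame simply makes the interior-product computation collapse to the single term $F_\varphi[\Psi]|_U=\pm B_\iota\, t_f\otimes dz$ for $\Psi=\omega_1\wedge\cdots\wedge\omega_n$, instead of the sum over $\mu\in J^n_{1,q}$ that you write down. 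So on the substantive point the two arguments coincide, and the paper's route is marginally cleaner.

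Where you diverge is in your last paragraph. You flag the global holomorphicity of $t\otimes\mathfrak X_{\lambda(1)}\wedge\cdots\wedge\mathfrak X_{\lambda(q)}$ as the essential difficulty and outline an order-counting scheme along polar components, with a compatibility constraint on the choice of $\lambda$. The paper does none of this: it simply asserts ``Clearly, $\varphi$ is a holomorphic field of $f$ over $M$ of degree $q$ with respect to $E$'' and moves on. Your worry is legitimate in principle---poles of the individual $\mathfrak X_i$ can cancel in the full determinant $\mathscr X$ yet survive in a $q\times q$ sub-wedge, so the bare hypothesis on $t\otimes\mathscr X$ does not force $t\otimes\mathfrak X_\lambda$ to be holomorphic---but the paper neither acknowledges nor resolves it. Thus your elaborate programme is aimed at a gap the paper leaves open rather than at an obstacle the paper actually overcomes; if your goal is to reproduce the paper's proof, that paragraph should be dropped, and if your goal is to close the gap, be aware that the compatible choice of $\lambda$ you hope for need not exist under the stated hypotheses alone.
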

\begin{proof}
The conditions imply that  the lifted section
$$t_f\otimes\mathscr X_{f}\in H^0(M,f^*(E\otimes K_X^*))$$
   exists with
$t_f\otimes\mathscr X_{f}\not\equiv 0$.  Let $S$  be the set of  all $x\in M$ such that
the rank of Jacobian  matrix of $f$ at $x$ is smaller than $m.$
 Note  that   $S, (\mathscr X_t), (t_f\otimes\mathscr X_{f})$ are  thin.
Take $x_0\in
M\setminus S$ such that $f(x_0)\not\in {\rm Supp}(t\otimes\mathscr X),$
then there exists local holomorphic coordinate
charts $(U;z_1,\cdots,z_m)$ of $x_0$ and $(V; w_1,\cdots,w_n)$ of
$f(x_0)$ with $f(U)\subseteq V,$ such that  $E|_V\cong V\times\mathbb C$ and $t\otimes \mathfrak X_1\wedge \cdots\wedge\otimes \mathfrak X_n\not=0$ on $V.$ 
Write $t=\tilde t\sigma,$ where  $\sigma$ is a local holomorphic
 frame of $E$ restricted to $V.$ One can factorize $\tilde t$ as $\tilde t=\tilde t_1\cdots\tilde t_n$ such that $\tilde t_1\mathfrak X_1,\cdots,\tilde t_n\mathfrak X_n$ are holomorphic  and have no zeros on $V,$ and hence 
form a local holomorphic frame of $T_X$ on $V.$ 
Let $\omega_1,\cdots, \omega_n$ denote   
  the dual frame relative to  $ \tilde t_1\mathfrak X_1,\cdots, \tilde t_n\mathfrak X_n$ restricted to $V.$
For $\nu\in J_{1,m}^{n},$  we write
\begin{equation*}
f^*\omega_\nu|_U=B_\nu dz,
\end{equation*}
 where
$\omega_\nu=\omega_{\nu(1)}\wedge\cdots\wedge
\omega_{\nu(m)}.$
 Then there exists  $\iota\in J_{1,m}^n$  such
that $B_\iota(x_0)\not=0$. Take  $\lambda=\iota^\bot\in J_{1,q}^n$ and 
  define
 $$\varphi:=\big(t\otimes \mathfrak X_{\lambda(1)}\wedge\cdots\wedge
 \mathfrak X_{\lambda(q)}\big)_f.$$
 Clearly, 
$\varphi$ is a holomorphic field of $f$ over $M$ of degree $q$ with respect to $E.$
Next, we show  that $\varphi$ is effective.  Set  $$\Psi=\omega_1\wedge\cdots\wedge \omega_n.$$
We have  
 $$\Psi_f\angle\varphi={\rm sign}(\iota,\iota^\bot) t_f\otimes \omega_{\iota
 f}$$
with $\omega_{\iota f}=(\omega_{\iota (1)} \wedge\cdots\wedge\omega_{\iota (m)})_f,$ which yields that
\begin{equation*}
F_\varphi[\Psi]\big|_U= \hat{f}(\Psi_f\angle \varphi)={\rm
sign}(\iota,\iota^\bot) B_{\iota}t_{f}\otimes dz.
\end{equation*}
So, $x_0\in M\setminus{\rm Supp}(F_\varphi)$ due to $t_f|_U\not=0$ and
$$(F_\varphi[\Psi])\cap U=(F_\varphi)\cap U.$$
That is to say,  ${\rm Supp}(F_\varphi)\subseteq S\cup\mathscr (t_f\otimes \mathscr X_{f})$  is thin. Hence,
$\varphi$ is effective.
\end{proof}

\begin{theorem}\label{thm3}
Let   $M=\mathbb{C}^m.$ Set
 $$\mathfrak Z_j=f_*\Big(\frac{\partial}{\partial z_j}\Big), \ \
 j=1,\cdots,m.$$
Then a holomorphic field $\varphi$ of $f$ over
$\mathbb{C}^m$ of degree $q$ with respect to $E$ is effective  if and only if
 $$\mathscr Z:=\mathfrak Z_{1f}\wedge\cdots\wedge \mathfrak Z_{mf}\wedge \varphi\not\equiv 0.$$
Moreover, we have $(F_\varphi)=(\mathscr Z)$.
\end{theorem}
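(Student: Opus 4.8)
The plan is to reduce the statement to the local computation already performed in the proof of Lemma~\ref{lem0}, by expressing $\mathscr Z$ in the very same local frame used there for $F_\varphi$. Fix $x_0\in\mathbb C^m$ and choose holomorphic charts $(U;z_1,\dots,z_m)$ about $x_0$ and $(V;w_1,\dots,w_n)$ about $f(x_0)$ with $f(U)\subseteq V$ and $E|_V\cong V\times\mathbb C$, with local frame $\sigma$ of $E$. Writing $f$ as $w_k=f_k(z)$, the pushed-forward vectors lift to $\mathfrak Z_{jf}|_U=\sum_{k=1}^n(\partial f_k/\partial z_j)\,(d^*w_k)_f$.

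First I would expand the wedge $\mathfrak Z_{1f}\wedge\cdots\wedge\mathfrak Z_{mf}$. The coefficient of $(d^*w_\mu)_f$, for $\mu\in J^n_{1,m}$, is the $m\times m$ Jacobian minor $\det(\partial f_{\mu(i)}/\partial z_j)_{i,j}$. The crucial observation is that this minor is exactly the function $A_\mu$ of Lemma~\ref{lem0}, since $f^*dw_\mu=df_{\mu(1)}\wedge\cdots\wedge df_{\mu(m)}=A_\mu\,dz$ has that same determinant as its coefficient; thus $\mathfrak Z_{1f}\wedge\cdots\wedge\mathfrak Z_{mf}|_U=\sum_{\mu\in J^n_{1,m}}A_\mu\,(d^*w_\mu)_f$. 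Wedging this against $\varphi|_U=\sum_{\lambda\in J^n_{1,q}}\varphi_\lambda\,\sigma_f\otimes(d^*w_\lambda)_f$ and using that $(d^*w_\mu)\wedge(d^*w_\lambda)$ is nonzero only for the complementary pair $\mu=\lambda^\bot$, where it equals ${\rm sign}(\lambda^\bot,\lambda)\,d^*w$ with $d^*w=d^*w_1\wedge\cdots\wedge d^*w_n$, yields
$$\mathscr Z|_U=\Big(\sum_{\lambda\in J^n_{1,q}}{\rm sign}(\lambda^\bot,\lambda)\,\varphi_\lambda A_{\lambda^\bot}\Big)\,\sigma_f\otimes(d^*w)_f.$$
The parenthesized coefficient is precisely the function $\tilde F_E$ that Lemma~\ref{lem0} reads off from $F_\varphi[dw]|_U=\tilde F_E\,\sigma_f\otimes dz$.

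From here the conclusion is immediate. Since $\sigma_f\otimes(d^*w)_f$ is a nowhere-vanishing local frame of $f^*(E\otimes K_X^*)$ and $\sigma_f\otimes dz$ one of $f^*E\otimes K_M$, both $\mathscr Z$ and $F_\varphi$ have the same local defining function $\tilde F_E$. Therefore $(\mathscr Z)\cap U=(\tilde F_E)=(F_\varphi)\cap U$, and covering $\mathbb C^m$ by such charts gives $(F_\varphi)=(\mathscr Z)$. Because $\mathbb C^m$ is connected, a holomorphic section of a line bundle has thin zero set if and only if it is not identically zero, so $\varphi$ is effective $\iff F_\varphi\not\equiv0\iff\mathscr Z\not\equiv0$. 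The only delicate point in the whole argument is the bookkeeping in the middle step, namely matching the signs and recognizing that the coefficient of $(d^*w_\mu)_f$ in the pushforward wedge coincides verbatim with the coefficient $A_\mu$ of $dz$ in the pulled-back coframe $f^*dw_\mu$; once this identification is made, $\mathscr Z$ and $F_\varphi$ literally share the function $\tilde F_E$, and nothing further is needed.
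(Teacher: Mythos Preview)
Your proof is correct and follows essentially the same route as the paper's: both compute the local coefficient of $\mathscr Z$ in the frame $\sigma_f\otimes(d^*w)_f$ and of $F_\varphi[dw]$ in the frame $\sigma_f\otimes dz$, observe that each equals $\sum_{\lambda}{\rm sign}(\lambda^\bot,\lambda)\,\varphi_\lambda A_{\lambda^\bot}$, and conclude $(F_\varphi)=(\mathscr Z)$. The only cosmetic differences are that you cite Lemma~\ref{lem0} for the $F_\varphi[dw]$ formula whereas the paper redoes that computation in place, and you introduce a local chart $(U;z_1,\dots,z_m)$ on $\mathbb C^m$ which is unnecessary since the global coordinates already serve.
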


\begin{proof}
  Take a
holomorphic coordinate chart  $(V; w_1,\cdots,w_n)$
of $X.$ We may assume that $E|_V\cong V\times \mathbb C.$
 Write
 $$f_j=w_j\circ f, \ \
 j=1,\cdots,n; \ \ \  \mathfrak Z_i=\sum_{k=1}^n\frac{\partial f_k}{\partial
 z_i}d^*w_k,\ \  i=1,\cdots,m,$$
 where $d^*w_k=\partial/\partial w_k$ for $1\leq k\leq m.$
A simple computation gives that
 $$\mathfrak Z_{1f}\wedge\cdots\wedge \mathfrak Z_{mf}=\sum_{\nu\in
 J_{1,m}^{n}}A_\nu d^*w_{\nu f},$$
where 
 $$A_\nu=\det\Big(\frac{\partial f_{\nu(j)}}{\partial
 z_i}\Big), \ \ \ d^*w_{\nu}=d^*w_{\nu(1)}\wedge\cdots\wedge d^*w_{\nu(m)}.$$
 Let $\sigma$ be a local holomorphic
 frame of $E$ on  $V$ and write  
$$
\varphi|_{f^{-1}(V)}=\sum_{\lambda\in J^n_{1,q}}\varphi_\lambda \sigma_f\otimes d^*w_{\lambda f},
$$
 where  
$d^*w_{\lambda}=d^*w_{\lambda(1)}\wedge\cdots\wedge d^*w_{\lambda(q)}.$ Hence, we obtain
$$\mathscr Z|_{f^{-1}(V)}=\sum_{\lambda\in J^n_{1,q}}{\rm sign}(\lambda^{\bot}, \lambda)
\varphi_\lambda A_{\lambda^{\bot}}\sigma_f\otimes d^*w_f.$$
On the other hand, we have
 $$F_\varphi[dw]\big|_{f^{-1}(V)}=
 \sum_{\lambda\in J_{1,q}^{n}}{\rm sign}(\lambda^\perp,\lambda)\varphi_\lambda
 A_{\lambda^\perp}\sigma_f\otimes dz,$$
 where $dz=dz_1\wedge\cdots\wedge dz_m.$
So,  we conclude that  
 $$(F_\varphi)\cap f^{-1}(V)=(F_\varphi[dw])\cap f^{-1}(V)=(\mathscr Z)\cap f^{-1}(V).$$
 The proof is completed. 
\end{proof}
\begin{theorem}\label{lem2} Assume  $f$ is differentiably non-degenerate.  Let  $\varphi$ be a holomorphic field
of $f$ over $M$ of degree $q$ with respect to $E.$ Equip  $M, X, E$ with Hermitian metrics $\alpha,\omega, h$
 respectively. Define
 a non-negative function $g$ 
by
 $$F_\varphi[\omega^n]=g^2f^*\omega^m.$$
Then $g\leq \|\varphi\|,$  where $\|\cdot\|$ is induced from $\omega, h.$
\end{theorem}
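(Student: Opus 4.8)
The plan is to establish the inequality pointwise on the dense open immersion locus and then invoke continuity. Write $S$ for the analytic set where $f_*$ has rank $<m$; since $f$ is differentiably non-degenerate, $S$ is thin, on $M\setminus S$ the form $f^*\omega^m$ is a positive volume form and $g$ is genuinely defined by the division, while on $S$ every maximal minor of the Jacobian vanishes so that both $f^*\omega^m$ and $F_\varphi[\omega^n]$ are zero. It thus suffices to bound $g$ on $M\setminus S$, and the bound persists on $M$ because $g$ is dominated there by the continuous function $\|\varphi\|$. Fix $x_0\in M\setminus S$. As $g(x_0)$ and $\|\varphi\|(x_0)$ are coordinate-free, I would choose an $\omega$-orthonormal basis $e_1,\cdots,e_n$ of $T_{f(x_0)}X$ whose first $m$ vectors span the image plane $P:=f_*(T_{x_0}M)$, together with holomorphic coordinates $(V;w_1,\cdots,w_n)$ centred at $f(x_0)$ with $\partial/\partial w_j|_{f(x_0)}=e_j$. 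Then $\omega$ is Euclidean at $f(x_0)$ (so $\rho(f(x_0))=1$ in $\omega^n|_V=i_n\rho\,dw\wedge d\bar w$ and $\{d^*w_\lambda\}_{\lambda\in J^n_{1,q}}$ is orthonormal in $\bigwedge^qT_X$ there), while the coordinate directions $1,\cdots,m$ are aligned with $P$.

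With these coordinates I would compute both top-degree forms at $x_0$. Because $\mathfrak Z_i\in P$ for every $i$, the Jacobian rows indexed $m+1,\cdots,n$ vanish at $x_0$, so among the minors $A_\nu$ defined by $f^*dw_\nu=A_\nu\,dz$ only $A_{\nu_0}$ with $\nu_0=(1,\cdots,m)$ is nonzero, and $A_{\nu_0}(x_0)\neq0$ since $\mathfrak Z_1,\cdots,\mathfrak Z_m$ are independent. A direct expansion of the pulled-back metric form (Cauchy--Binet) then gives $f^*\omega^m|_{x_0}=i_m|A_{\nu_0}|^2\,dz\wedge d\bar z$. On the other side, combining the expansion $F_\varphi[dw]|_U=\sum_{\lambda}{\rm sign}(\lambda^\bot,\lambda)\varphi_\lambda A_{\lambda^\bot}\sigma_f\otimes dz$ from the proof of Lemma \ref{lem0} with \eqref{qq}, and writing $\varphi|_U=\sum_{\lambda\in J^n_{1,q}}\varphi_\lambda\sigma_f\otimes d^*w_{\lambda f}$, one finds $F_\varphi[\omega^n]|_{x_0}=i_m\|\sigma_f\|^2|\tilde F_\varphi|^2\,dz\wedge d\bar z$ with $\tilde F_\varphi=\sum_\lambda{\rm sign}(\lambda^\bot,\lambda)\varphi_\lambda A_{\lambda^\bot}$. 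Since $A_{\lambda^\bot}(x_0)\neq0$ only for $\lambda=\lambda_0:=\nu_0^\bot=(m+1,\cdots,n)$, this sum collapses to a single term, namely $\tilde F_\varphi(x_0)={\rm sign}(\nu_0,\lambda_0)\varphi_{\lambda_0}A_{\nu_0}$.

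Dividing the two expressions, the minor $A_{\nu_0}$ and the factor $i_m$ cancel and I obtain $g^2(x_0)=\|\sigma_f\|^2|\varphi_{\lambda_0}|^2$. This is precisely one summand of $\|\varphi\|^2(x_0)=\|\sigma_f\|^2\sum_{\lambda\in J^n_{1,q}}|\varphi_\lambda|^2$, the last equality holding because $\{d^*w_\lambda\}$ is orthonormal at $f(x_0)$ for the norm induced by $\omega$ and $h$. Hence $g^2(x_0)\leq\|\varphi\|^2(x_0)$, which is the claimed inequality. Conceptually, $g(x_0)$ is nothing but the norm of the orthogonal projection of $\varphi(x_0)$ onto the line $E\otimes\bigwedge^qP^\bot$, so the bound is transparent and is an equality exactly when $\varphi(x_0)$ lies in $E\otimes\bigwedge^qP^\bot$.

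The one genuinely useful idea is the adapted coordinate choice making $\omega$ standard at $f(x_0)$ and aligning the first $m$ directions with $P$: it simultaneously trivializes $f^*\omega^m$ to $|A_{\nu_0}|^2$ and collapses $\tilde F_\varphi$ to a single term, so nothing beyond ``one term $\leq$ the full sum'' is needed. Equivalently, keeping $\omega$ standard but not aligning, one has $f^*\omega^m|_{x_0}=i_m\sum_\nu|A_\nu|^2\,dz\wedge d\bar z$ by Cauchy--Binet, and a single Cauchy--Schwarz estimate on $\tilde F_\varphi$ together with the bijectivity of $\bot:J^n_{1,q}\to J^n_{1,m}$ yields the same conclusion. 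I expect the remaining difficulty to be purely bookkeeping: one must confirm that the normalization constants $i_n,i_m$ and the combinatorial factors arising in $\omega^n$ versus $\omega^m$ cancel exactly in the ratio defining $g^2$, and that the degeneracy locus $S$ contributes nothing. Note in passing that the metric $\alpha$ on $M$ never enters the estimate, consistent with the statement.
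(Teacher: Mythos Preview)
Your proposal is correct. Your primary argument---aligning the orthonormal frame so that the first $m$ directions span $P=f_*(T_{x_0}M)$, which forces all minors $A_\nu$ except $A_{\nu_0}$ to vanish and collapses $\tilde F_\varphi$ to a single term---is a mild sharpening of the paper's route. The paper chooses orthonormal frames $\theta_j$, $\psi_j$ for $\alpha$, $\omega$ at the point (without aligning to $P$), obtains $u(x_0)=\sum_\nu|A_\nu(x_0)|^2$ for $f^*\omega^m=u\,\alpha^m$, and then bounds $|\sum_\lambda{\rm sign}(\lambda^\perp,\lambda)\varphi_\lambda A_{\lambda^\perp}|^2\|\sigma_f\|^2$ by $u\,\|\varphi\|^2$ via Cauchy--Schwarz and the bijection $\bot$. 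This is exactly the alternative you sketch in your last paragraph. What your aligned-frame version buys is the conceptual identification of $g(x_0)$ as the norm of the projection of $\varphi(x_0)$ onto $E\otimes\bigwedge^q P^\perp$, making the inequality an obvious ``one summand $\le$ full sum''; the paper's version avoids the extra care needed to produce adapted holomorphic coordinates. You are also more explicit than the paper about restricting to $M\setminus S$ and extending by continuity; the paper tacitly assumes $u(x_0)\neq 0$ when it writes that equation, which amounts to the same restriction.
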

\begin{proof}

For any  $x_0\in M,$
we can  take a  local holomorphic frame
$(U; \theta_1,\cdots,\theta_m)$ of $T^*_M$ around  $x_0$ and a local holomorphic frame
 $(V; \psi_1,\cdots,\psi_n)$  of  $T^*_X$ around  $f(x_0)$ with   $f(U)\subseteq V,$ such that
 $$\alpha|_{x_0}=\frac{\sqrt{-1}}{2\pi}
 \sum_{j=1}^m\theta_j\wedge
 \bar{\theta}_j, \ \ \omega|_{f(x_0)}=\frac{\sqrt{-1}}{2\pi}\sum_{j=1}^n\psi_j\wedge
 \bar{\psi}_j.$$
In addition, we may assume that $E|_V\cong V\times \mathbb C.$ Set
$$ f^*\omega^m=u\alpha^m; \ \ \   f^*\psi_\nu=A_{\nu}\theta_1\wedge\cdots\wedge\theta_m,\ \  ^\forall \nu\in J_{1,m}^{n}
$$
with 
 $\psi_\nu=\psi_{\nu(1)}\wedge
\psi_{\nu(2)}\wedge\cdots
 \wedge \psi_{\nu(m)}.$
It is trivial to confirm that
\begin{equation}\label{drt}
 u(x_0)=\sum_{\nu\in J_{1,m}^{n}}|A_\nu(x_0)|^2\not=0.
 \end{equation}
   Let  $\sigma$ be a local holomorphic frame of $E$ and  write  
   \begin{equation}\label{123aw}
  \varphi|_U=\sum_{\lambda\in J_{1,q}^{n}}\varphi_\lambda\sigma_f\otimes
 \psi^*_{\lambda f}
\end{equation}
 with
 $\psi^*_\lambda=\psi^*_{\lambda(1)}\wedge\cdots
 \wedge \psi^*_{\lambda(q)},$
 where  $\psi^*_1,\cdots,\psi^*_n$ is  the dual frame relative to
$\psi_1,\cdots,\psi_n.$
Set 
$\Psi=\psi_1\wedge\cdots\wedge\psi_n.$ Then
 $$\Psi_f\angle\varphi=\sum_{\lambda\in J_{1,q}^{n}}{\rm sign}(\lambda^\perp,\lambda)\varphi_\lambda
\sigma_f\otimes\psi_{\lambda^\perp f}.$$
Consequently,
$$ F_\varphi[\Psi]\big|_U=\hat{f}(\Psi_f\angle\varphi)=
 \sum_{\lambda\in J_{1,q}^{n}}{\rm sign}(\lambda^\perp,\lambda)\varphi_\lambda
 A_{\lambda^\perp}\sigma_f\otimes\theta_1\wedge\cdots\wedge\theta_m.
$$
This leads to
 $$g^2(x_0)u(x_0)=\Bigg| \sum_{\lambda\in J_{1,q}^{n}}{\rm sign}(\lambda^\perp,\lambda)\varphi_\lambda(x_0)
 A_{\lambda^\perp}(x_0)\Bigg|^2\|\sigma_f(x_0)\|^2.$$
Combine  (\ref{drt}) and (\ref{123aw}) with Schwarz's inequality,
 $$\Bigg| \sum_{\lambda\in J_{1,q}^{n}}{\rm sign}(\lambda^\perp,\lambda)\varphi_\lambda(x_0)
 A_{\lambda^\perp}(x_0)\Bigg|^2\|\sigma_f(x_0)\|^2\leq u(x_0)\|\varphi(x_0)\|^2$$
which follows that  $g(x_0)\leq \|\varphi(x_0)\|.$ This proves the theorem.
\end{proof}

\section{Second Main Theorem type inequalities}

\subsection{Preliminaries}~

For $z=(z_1,\cdots,z_m)\in\mathbb C^m,$  set $\|z\|^2=|z_1|^2+\cdots+|z_m|^2$ and 
$$\alpha=dd^c\|z\|^2,\ \ \ \gamma=d^c\log\|z\|^2\wedge \left(dd^c\log\|z\|^2\right)^{m-1}$$
with $d^c=\frac{\sqrt{-1}}{4\pi}(\bar\partial-\partial)$ and $dd^c=\frac{\sqrt{-1}}{2\pi}\partial\bar{\partial}.$
Let $$f:\mathbb C^m\rightarrow X$$  be a holomorphic mapping, where $X$  is a  smooth complex projective variety.
Given  an ample  line bundle  $L\rightarrow X$
  which carries  a Hermitian metric $h.$
 The \emph{characteristic function} of $f$ with respect to $L$ is defined by
 $$ T_f(r,L)=\int_1^r\frac{dt}{t^{2m-1}}\int_{\|z\|<t}f^*c_1(L,h)\wedge\alpha^{m-1},$$
 up to a bounded term for different metrics $h$ on $L.$   Since a holomorphic line bundle  on $X$
 can be written as the difference of two  ample  line bundles, then   definition of $T_{f}(r,L)$   extends to
 an arbitrary holomorphic line bundle.  Let $D$ be a  divisor on $X,$
   define   \emph{proximity function} of $f$ with respect to $D$ by
$$m_f(r,D)=\int_{\|z\|=r}\log\frac{1}{\|s_D\circ f\|}\gamma,$$
where  $s_D$ is the canonical section associated to $D.$  Write $s_D=\tilde{s}_De$ locally, where $e$ is a local holomorphic frame of $([D],h).$  We define   \emph{counting function}
 of $f$ with respect to $D$ by
$$N_f(r,D)=\int_1^r\frac{dt}{t^{2m-1}}\int_{\|z\|=t}dd^c\log|\tilde{s}_D\circ f|^2\wedge\alpha^{m-1}.$$
Assume  that $f(\mathbb C^m)\not\subseteq D,$ then the First Main Theorem (cf. \cite{ru}) says that
$$T_f(r,[D])=m_f(r,D)+N_f(r,D)+O(1).$$
\begin{lemma}[Borel Lemma, \cite{No}] Let $\phi$ be a monotone increasing function on $[0,\infty)$ such that $\phi(r_0)>1$ for some $r_0\geq0.$
Then for any $\delta>0,$ there exists a set $E_\delta\subset[0,\infty)$ of finite Lebegue measure such that
$$\phi'(r)\leq \phi(r)\log^{1+\delta}\phi(r)$$
holds for $r>0$ outside $E_\delta.$
\end{lemma}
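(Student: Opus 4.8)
The plan is to verify directly that the set where the inequality fails has finite Lebesgue measure, by reducing the estimate to a convergent improper integral. First I would recall that a monotone increasing function $\phi$ is differentiable almost everywhere, that $\phi'\geq 0$ is measurable, and that it satisfies the substitution inequality $\int_a^b g(\phi(r))\phi'(r)\,dr\leq \int_{\phi(a)}^{\phi(b)}g(u)\,du$ for every nonnegative measurable $g$. Since the interval $[0,r_0)$ is bounded it contributes only finite measure, so it suffices to control the bad set on $[r_0,\infty)$. There, monotonicity gives $\phi(r)\geq\phi(r_0)>1$, hence $\log\phi(r)\geq\log\phi(r_0)>0$, which is exactly what is needed to make the relevant integral converge.

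Next I would set
$$E_\delta=[0,r_0)\cup\big\{r\geq r_0:\ \phi'(r)>\phi(r)\log^{1+\delta}\phi(r)\big\},$$
and estimate the measure of the second piece. On that set the ratio $\phi'(r)/\big(\phi(r)\log^{1+\delta}\phi(r)\big)$ exceeds $1$, so
$$\big|E_\delta\cap[r_0,\infty)\big|\leq \int_{r_0}^\infty\frac{\phi'(r)}{\phi(r)\log^{1+\delta}\phi(r)}\,dr.$$
Applying the substitution inequality above with $g(u)=1/\big(u\log^{1+\delta}u\big)$ and $u=\phi(r)$ bounds the right-hand side by $\int_{\phi(r_0)}^{\infty}\frac{du}{u\log^{1+\delta}u}$.

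Finally I would evaluate this last integral by the further substitution $v=\log u$, obtaining $\int_{\log\phi(r_0)}^\infty v^{-(1+\delta)}\,dv=\frac{1}{\delta\,(\log\phi(r_0))^{\delta}}$, which is finite precisely because $\delta>0$ and $\log\phi(r_0)>0$. Combining this with $\big|[0,r_0)\big|=r_0<\infty$ yields $|E_\delta|<\infty$, and by construction the asserted bound holds for all $r$ off $E_\delta$. The one point requiring care — and the main obstacle — is the substitution inequality for a merely monotone $\phi$: because $\phi$ need not be absolutely continuous, a naive change of variables can fail to be an equality, but the inequality in the correct direction persists (the singular part and jumps of $\phi$ only decrease $\int\phi'$), and this is what makes the argument rigorous without imposing any extra regularity on $\phi$.
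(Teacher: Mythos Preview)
Your argument is correct and follows essentially the same route as the paper: define the exceptional set as the locus where the inequality fails, split off $[0,r_0)$, and bound the measure of the remainder by $\int_{r_0}^\infty \phi'(r)/(\phi(r)\log^{1+\delta}\phi(r))\,dr$. The paper's proof stops at declaring this last integral finite, whereas you go further and justify it via the substitution inequality and an explicit evaluation; your added care about the substitution inequality for merely monotone (not necessarily absolutely continuous) $\phi$ is a genuine improvement in rigor over the paper's terse presentation.
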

\begin{proof} Since $\phi$ is monotone increasing,
$\phi'(r)$ exists for almost all $r\geq0.$
Set
$$S=\left\{r\geq0: \phi'(r)>\phi(r)\log^{1+\delta}\phi(r)\right\}.$$
Then
 \begin{eqnarray*}
\int_Sdr &\leq&\int_0^{r_0}dr+\int_{S\setminus[0,r_0]}dr
\leq r_0+\int_{r_0}^\infty\frac{\phi'(r)}{\phi(r)\log^{1+\delta}\phi(r)}dr <\infty.
 \end{eqnarray*}
\end{proof}
Let $\kappa$ be a non-negative, locally integrable real-valued function on $\mathbb C^m.$ Set
$$T_\kappa(r)=\int_1^r\frac{dt}{t^{2m-1}}\int_{\|z\|<t}\kappa\alpha^m.$$
Then $T_\kappa(r)$ is a monotone increasing function on $[1,\infty).$
\begin{lemma}[Calculus Lemma]\label{cal} Let $\kappa$ be a non-negative, locally integrable  real-valued function  on $\mathbb C^m$ such that  
$r^{2m-1}T'_\kappa(r)$  is monotone increasing in $r$ and  $r_0^{2m-1}T'_\kappa(r_0)>1$ for some $r_0\geq1.$
Then
$$\log^+\int_{\|z\|=r}\kappa\gamma\leq_{\rm exc}
O\Big(\log^+ T_\kappa(r)+\log^+\log r\Big).$$
\end{lemma}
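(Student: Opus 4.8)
The plan is to reduce the spherical integral $\int_{\|z\|=r}\kappa\gamma$ to the radial derivative of a ball integral, and then to apply the Borel Lemma twice. Write $A(r)=\int_{\|z\|<r}\kappa\alpha^m$. By the very definition of $T_\kappa$ one has $T'_\kappa(r)=A(r)/r^{2m-1}$, i.e. $A(r)=r^{2m-1}T'_\kappa(r)$; the two hypotheses of the lemma then say precisely that $A$ is monotone increasing and that $A(r_0)>1$ for some $r_0\ge1$, which are exactly the conditions needed to feed $A$ into the Borel Lemma.

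First I would establish the coarea (polar–coordinate) bridge. Using the rotation invariance together with the normalizations $\int_{\|z\|=t}\gamma=1$ and $\int_{\|z\|<r}\alpha^m=r^{2m}$, the form $\kappa\alpha^m$ decomposes over radial shells, giving
$$A(r)=\int_0^r 2m\,t^{2m-1}\Big(\int_{\|z\|=t}\kappa\gamma\Big)\,dt.$$
Differentiating in $r$ (valid for almost every $r$, since $A$ is monotone) yields the key identity
$$\int_{\|z\|=r}\kappa\gamma=\frac{A'(r)}{2m\,r^{2m-1}},$$
which expresses the quantity to be estimated in terms of the increasing function $A$ and its derivative.

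Next I would apply the Borel Lemma twice. Applied to $\phi=A$ it gives $A'(r)\le A(r)\log^{1+\delta}A(r)$ outside a set $E_1$ of finite measure; applied to $\phi=T_\kappa$ it gives $T'_\kappa(r)\le T_\kappa(r)\log^{1+\delta}T_\kappa(r)$ outside a set $E_2$ of finite measure. Substituting the first bound into the bridge identity, then rewriting $A(r)/r^{2m-1}=T'_\kappa(r)$ and invoking the second bound, I obtain for $r\notin E_1\cup E_2$ (still of finite measure)
$$\int_{\|z\|=r}\kappa\gamma\le\frac{1}{2m}\,T_\kappa(r)\,\log^{1+\delta}T_\kappa(r)\,\log^{1+\delta}A(r).$$
Taking $\log^+$ and using $\log^+\log A(r)=\log^+\log\big(r^{2m-1}T'_\kappa(r)\big)=O\big(\log^+\log r+\log^+\log T_\kappa(r)\big)$ collapses the right-hand side to $O\big(\log^+T_\kappa(r)+\log^+\log r\big)$, which is the assertion.

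The two Borel applications and the final logarithmic bookkeeping are routine; the step that deserves the most care is the coarea bridge, where the specific normalizations of $\alpha$ and $\gamma$ must be used correctly both to produce the factor $2m\,r^{2m-1}$ and to identify $A(r)$ with $r^{2m-1}T'_\kappa(r)$. One minor technical point to dispatch is the case where $T_\kappa$ never exceeds $1$, so that the Borel Lemma cannot be applied to it directly: here one replaces $T_\kappa$ by $T_\kappa+C$ for a constant $C$ (which is still monotone increasing, exceeds $1$, and has the same derivative), and this alters neither the estimate nor the $O(\cdot)$ conclusion since $\log^+(T_\kappa+C)=\log^+T_\kappa+O(1)$.
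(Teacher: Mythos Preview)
Your proof is correct and follows essentially the same approach as the paper's: both establish the coarea bridge $\int_{\|z\|=r}\kappa\gamma=\frac{1}{2m\,r^{2m-1}}\frac{d}{dr}\big(r^{2m-1}T'_\kappa(r)\big)$ (the paper routes this through the Euclidean measures $dV$ and $d\sigma_r$, arriving at the equivalent constant $\pi^m/(m!\,\omega_{2m-1})=1/(2m)$) and then apply the Borel Lemma twice, once to $A(r)=r^{2m-1}T'_\kappa(r)$ and once to $T_\kappa$. Your additional remark handling the case where $T_\kappa$ never exceeds $1$ is a nice touch that the paper leaves implicit.
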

\begin{proof} Denote by $dV$ the  Euclidean volume measure on $\mathbb C^m,$ and  by
$d\sigma_r$  the   volume measure (induced by $dV$) on $\{\|z\|=r\}$ for $r>0.$ A well-known fact says that
$$dV=\frac{\pi^m}{m!}\alpha^m, \ \ \  d\sigma_r=\omega_{2m-1}r^{2m-1}\gamma,$$
where $\omega_{2m-1}$ is the Euclidean volume of  unit sphere in $\mathbb R^{2m}.$ Therefore,
\begin{eqnarray*}
\frac{d}{dr}T_\kappa(r)&=&\frac{1}{r^{2m-1}}\int_{\|z\|<r}\kappa\alpha^m
= \frac{m!}{\pi^mr^{2m-1}}\int_{\|z\|<r}\kappa dV \\
&=& \frac{m!}{\pi^mr^{2m-1}}\int_0^rdt\int_{\|z\|=t}\kappa d\sigma_t \\
&=& \frac{m!\omega_{2m-1}}{\pi^mr^{2m-1}}\int_0^rdt\int_{\|z\|=t}\kappa \gamma,
 \end{eqnarray*}
which results in
$$\int_{\|z\|=r}\kappa \gamma=\frac{\pi^m}{m!\omega_{2m-1}r^{2m-1}}\frac{d}{dr}\Big(r^{2m-1}\frac{d}{dr}T_\kappa(r)\Big).$$
Indeed, we can use Borel's lemma to get
\begin{eqnarray*}
\frac{d}{dr}T_\kappa(r)&\leq_{\rm exc}& T_\kappa(r)\log^{1+\delta}T_\kappa(r),  \\
\frac{d}{dr}\Big(r^{2m-1}\frac{d}{dr}T_\kappa(r)\Big)&\leq_{\rm exc}&
\Big(r^{2m-1}\frac{d}{dr}T_\kappa(r)\Big)
\log^{1+\delta}\Big(r^{2m-1}\frac{d}{dr}T_\kappa(r) \Big).
 \end{eqnarray*}
Put  the above together, we conclude   that
$$\log^+\int_{\|z\|=r}\kappa\gamma\leq_{\rm exc}
O\Big(\log^+ T_\kappa(r)+\log^+\log r\Big).$$
\end{proof}
\begin{lemma}[Green-Jensen formula, \cite{No}]\label{GJ} Let $u\not\equiv-\infty$ be a plurisubharmonic function on $\mathbb C^m.$ Then for  any $0<s<r,$
$$\int_{\|z\|=r}u\gamma-\int_{\|z\|=s}u\gamma=2\int_s^r\frac{dt}{t^{2m-1}}\int_{\|z\|<t}dd^cu\wedge\alpha^{m-1}$$
holds in the sense of currents.
\end{lemma}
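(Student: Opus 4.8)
The plan is to establish the identity first for smooth $u$ by Stokes' theorem and then to recover the general plurisubharmonic case by regularization. Write $\rho=\|z\|^2$, so that $\alpha=dd^c\rho$ and $\gamma=d^c\log\rho\wedge(dd^c\log\rho)^{m-1}$ away from the origin. Since $0<s<r$, the closed annulus $\{s\le\|z\|\le r\}$ avoids $0$, where $\log\rho$ and $\gamma$ are smooth; this is the region on which I integrate, so the singularity of the kernels never intervenes.

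First I record two identities on $\mathbb C^m\setminus\{0\}$. From $d^c\log\rho=\rho^{-1}d^c\rho$ and $dd^c\log\rho=\rho^{-1}\alpha-\rho^{-2}d\rho\wedge d^c\rho$, together with $(d\rho\wedge d^c\rho)^2=0$, a binomial expansion gives
$$\gamma=\rho^{-m}\,d^c\rho\wedge\alpha^{m-1}.$$
Moreover $d\gamma=(dd^c\log\rho)^m=0$ off the origin. I will also use the pointwise symmetry $du\wedge d^c\rho\wedge\alpha^{m-1}=d\rho\wedge d^c u\wedge\alpha^{m-1}$, which holds because only the $(1,1)$-part of $du\wedge d^c\rho$, being symmetric in $u$ and $\rho$, survives after wedging with the $(m-1,m-1)$-form $\alpha^{m-1}$.

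For smooth $u$, Stokes' theorem applied to $u\gamma$ on $A:=\{s<\|z\|<r\}$, whose oriented boundary is $\{\|z\|=r\}-\{\|z\|=s\}$, together with $d(u\gamma)=du\wedge\gamma$, yields
$$\int_{\|z\|=r}u\gamma-\int_{\|z\|=s}u\gamma=\int_A du\wedge\gamma.$$
Substituting $\gamma=\rho^{-m}d^c\rho\wedge\alpha^{m-1}$, applying the symmetry, and writing $\rho^{-m}d\rho=d\Phi(\rho)$ with $\Phi(\rho)=-\rho^{1-m}/(m-1)$ (and $\Phi=\log\rho$ if $m=1$), a second integration by parts using $d\alpha^{m-1}=0$ gives
$$\int_A du\wedge\gamma=\int_{\partial A}\Phi(\rho)\,d^c u\wedge\alpha^{m-1}-\int_A\Phi(\rho)\,dd^c u\wedge\alpha^{m-1}.$$
Set $n(t)=\int_{\|z\|<t}dd^c u\wedge\alpha^{m-1}$; Stokes on the ball gives $\int_{\|z\|=t}d^c u\wedge\alpha^{m-1}=n(t)$, so the boundary term equals $\Phi(r^2)n(r)-\Phi(s^2)n(s)$ while the bulk term is the Stieltjes integral $-\int_s^r\Phi(t^2)\,dn(t)$. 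Integrating this by parts in $t$ cancels the boundary contributions exactly, and since $\Phi'(\rho)=\rho^{-m}$ the chain rule $d(t^2)=2t\,dt$ produces the constant $2$, leaving
$$\int_{\|z\|=r}u\gamma-\int_{\|z\|=s}u\gamma=2\int_s^r\frac{dt}{t^{2m-1}}\int_{\|z\|<t}dd^c u\wedge\alpha^{m-1}.$$

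To pass to an arbitrary plurisubharmonic $u\not\equiv-\infty$, regularize by convolution: $u_\varepsilon=u*\chi_\varepsilon$ is a decreasing family of smooth plurisubharmonic functions with $u_\varepsilon\downarrow u$. The left-hand side converges by monotone convergence of the sphere averages, and the right-hand side by the weak convergence $dd^c u_\varepsilon\to dd^c u$ of positive currents tested against the smooth form $\alpha^{m-1}$, followed by dominated convergence in $t$. I expect this limiting step to be the main obstacle: one must justify the current-theoretic convergence for $u$ that may equal $-\infty$ on a pluripolar set, checking that the sphere integrals are finite for the radii in question and that the Monge--Amp\`ere mass of $u$ puts no charge on the bounding spheres $\{\|z\|=s\}$ and $\{\|z\|=r\}$. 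The smooth computation itself is deterministic, the only delicacy being the two-step integration by parts that cancels the boundary terms and extracts the factor $2$.
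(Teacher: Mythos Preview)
The paper does not supply its own proof of this lemma; it is stated with a citation to Noguchi--Winkelmann \cite{No} and used as a black box. So there is no in-paper argument against which to compare yours.

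Your proof is the standard one and is essentially correct. The computation for smooth $u$ via the two-step Stokes argument and the identity $\gamma=\rho^{-m}d^c\rho\wedge\alpha^{m-1}$ is fine, and the regularization step is the right mechanism. Two small remarks: first, the phrase ``in the sense of currents'' in the statement is doing work precisely to sidestep the issue you flag about possible mass on individual spheres---one interprets the identity distributionally in the radial variable, or equivalently for almost every $r$ (the sphere integral of a locally integrable $u$ is finite for a.e.\ radius and the resulting function of $r$ is itself locally integrable). Second, for the right-hand side the cleanest justification is monotonicity: since $u_\varepsilon\downarrow u$ implies $dd^c u_\varepsilon\to dd^c u$ weakly and the integrands $t^{1-2m}\int_{\|z\|<t}dd^c u_\varepsilon\wedge\alpha^{m-1}$ are nonnegative and controlled on compacta by the mass of $dd^c u_{\varepsilon_0}$ for any fixed $\varepsilon_0$, you can pass to the limit without needing a separate dominated-convergence bound.
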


\subsection{Second Main Theorem type inequalities}~

Let $$f:\mathbb C^m\rightarrow X$$
 be a holomorphic mapping into a smooth complex projective variety $X$ with complex dimension $n>m.$
Let $E$ be  a holomorphic line bundle over $X.$  For $\varphi,$  an effective holomorphic field
of $f$ over $M$ of degree $q$ with respect to $E,$   
   the  {\it ramification
term} $N_{f, {\rm Ram}}(r)$ of $f$ with respect to $F_\varphi$ is defined by 
$$N_{f, {\rm Ram}}(r)=N(r, (F_\varphi)).
$$

\begin{theorem}\label{mainthm}
Let $E$ be a holomorphic line bundle over $X,$ and let $L$ be an ample line bundle over $X.$
  Let $f:\mathbb C^m\rightarrow X$ be a  holomorphic mapping.   Set
 $$\mathfrak Z_j=f_*\Big(\frac{\partial}{\partial z_j}\Big), \ \
 j=1,\cdots,m.$$
Let  $\mathfrak X_1,$ $\cdots, \mathfrak X_{n-m}$ be linearly almost-independent meromorphic vector fields over $X,$ and let $\mathfrak X_1\wedge \cdots\wedge \mathfrak X_{n-m}$ be
of pole order at most $E,$ i.e.,  there exists a nonzero holomorphic section
$t: X\rightarrow E$ such that $t\otimes \mathfrak X_1\wedge\cdots \wedge \mathfrak X_{n-m}$ is holomorphic.
Assume  that 
  $\mathfrak Z_{1f}\wedge\cdots\wedge \mathfrak Z_{mf}\wedge \varphi\not\equiv 0$
with  $\varphi=(t\otimes \mathfrak X_1\wedge\cdots \wedge \mathfrak X_{n-m})_f.$
If $f(\mathbb C^m)\nsubseteq F^{-1}_\varphi(0)$, then
\begin{equation*}
T_f(r,K_X)+ N_{f, {\rm Ram}}(r)\leq_{\rm exc} T_f(r,E)+ O\Big(\log T_f(r,L)+\log^+\log r\Big).
\end{equation*}
\end{theorem}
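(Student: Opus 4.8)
The plan is to deduce the whole inequality from a single Poincar\'e--Lelong/Green--Jensen identity for the global Jacobian section $F_\varphi$, and then to estimate the resulting boundary integral by concavity together with the Calculus Lemma. Since $\varphi=(t\otimes\mathfrak X_1\wedge\cdots\wedge\mathfrak X_{n-m})_f$ and $\mathfrak Z_{1f}\wedge\cdots\wedge\mathfrak Z_{mf}\wedge\varphi\not\equiv0$, Theorem \ref{thm3} shows that $\varphi$ is effective, so the associated Jacobian section $F_\varphi$ of $f$ with respect to $E$ is a nonzero holomorphic section of $f^*E\otimes K(f)=f^*(E\otimes K_X^*)\otimes K_{\mathbb{C}^m}$ with zero divisor $(F_\varphi)$. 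Fixing Hermitian metrics and using that $K_{\mathbb{C}^m}$ carries the flat metric induced by $\alpha$, Poincar\'e--Lelong gives, as currents,
$$dd^c\log\|F_\varphi\|^2=[(F_\varphi)]-f^*c_1(E)+f^*c_1(K_X).$$
Applying Lemma \ref{GJ} to the local plurisubharmonic potentials $\log|\tilde F_\varphi|^2$ and integrating the smooth curvature terms against $\alpha^{m-1}$ yields
$$T_f(r,K_X)+N_{f,\mathrm{Ram}}(r)=T_f(r,E)+\tfrac12\int_{\|z\|=r}\log\|F_\varphi\|^2\gamma+O(1),$$
where $N_{f,\mathrm{Ram}}(r)=N(r,(F_\varphi))$. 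Thus the theorem reduces to the upper bound $\tfrac12\int_{\|z\|=r}\log\|F_\varphi\|^2\gamma\le_{\mathrm{exc}}O(\log^+T_f(r,L)+\log^+\log r)$.

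To estimate the boundary integral I would first convert $\|F_\varphi\|^2$ into a volume density. Equipping $\mathbb{C}^m,X,E$ with $\alpha,\omega,h$ and taking $\omega=c_1(L,h)$, Theorem \ref{lem2} provides a function $g$ with $F_\varphi[\omega^n]=g^2f^*\omega^m$ and $g\le\|\varphi\|$. Because $\varphi$ is the lift $s_f$ of the \emph{fixed} holomorphic section $s=t\otimes\mathfrak X_1\wedge\cdots\wedge\mathfrak X_{n-m}$ over the compact variety $X$, its norm is bounded: $\|\varphi\|=\|s\circ f\|\le\max_X\|s\|=:C$, hence $g\le C$. On the other hand, the defining identity $F_\varphi[\omega^n]=\|F_\varphi\|^2\Theta$ combined with the flatness of $\alpha$ forces $\Theta=C_0\alpha^m$ for a genuine constant $C_0$: in the metric bookkeeping the factor $\|d^*w_f\|^2$ equals the density of $\omega^n$ and cancels the $\rho\circ f$ appearing in $F_\varphi[\omega^n]$, leaving only the constant $\|dz\|_\alpha^2$. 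Combining these, $\|F_\varphi\|^2=(g^2/C_0)\Lambda\le C'\Lambda$, where $\Lambda=(f^*\omega)^m/\alpha^m$ is the determinant density of $f^*\omega$.

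The final step passes from the determinant density to the trace density so that $T_f(r,L)$ appears with its correct first-degree characteristic. Writing $\mu_1,\dots,\mu_m\ge0$ for the eigenvalues of $f^*\omega$ relative to $\alpha$, the arithmetic--geometric mean inequality gives $\Lambda=\prod_j\mu_j\le\big(\tfrac1m\sum_j\mu_j\big)^m=\psi^m$, where $\psi:=f^*\omega\wedge\alpha^{m-1}/\alpha^m$ satisfies $T_\psi(r)=T_f(r,L)$ exactly. Hence $\log\|F_\varphi\|^2\le m\log^+\psi+O(1)$, and by concavity of $\log^+$ against the probability measure $\gamma$,
$$\int_{\|z\|=r}\log\|F_\varphi\|^2\gamma\le m\log^+\!\!\int_{\|z\|=r}\psi\,\gamma+O(1).$$
Applying the Calculus Lemma (Lemma \ref{cal}) with $\kappa=\psi$ and using $T_\psi(r)=T_f(r,L)$ bounds the right-hand side by $O(\log^+T_f(r,L)+\log^+\log r)$; inserting this into the reduction identity completes the proof.

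I expect the main obstacle to be the careful justification of the reduction identity when $\log\|F_\varphi\|^2$ is not globally plurisubharmonic, so that Lemma \ref{GJ} must be applied to the local potentials while the smooth curvature contributions are matched precisely to $T_f(r,E)$ and $T_f(r,K_X)$; a secondary delicate point is the metric computation showing $\Theta=C_0\alpha^m$ with $C_0$ constant. Once these are settled, the key idea that makes the estimate close is the \emph{arithmetic--geometric mean reduction} from the determinant density $\Lambda$ to the trace density $\psi$, which produces exactly $T_f(r,L)$ (rather than a power of it) and lets the Calculus Lemma deliver the desired small term.
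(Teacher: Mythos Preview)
Your proof is correct and follows essentially the same route as the paper: Poincar\'e--Lelong plus Green--Jensen for the reduction identity, the bound $g\le\|\varphi\|\le C$ from Theorem~\ref{lem2} and compactness of $X$, the determinant--trace (AM--GM) inequality $\Lambda\le\psi^m$, and the Calculus Lemma applied to $\psi$. The only cosmetic difference is that you carry $\|F_\varphi\|^2$ and check $\Theta=C_0\alpha^m$, whereas the paper works directly with the density $\xi$ defined by $F_\varphi[\omega^n]=\xi\alpha^m$; with the induced metrics you chose, $\xi$ and $\|F_\varphi\|^2$ differ by a constant, so the two reductions coincide.
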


\begin{proof}  It is known  from Theorem \ref{thm3} that $\varphi$ is an effective holomorphic field
of $f$ over $M$ of degree $q$ with respect to $E.$  By Lemma \ref{lem0}, $f$ is differentiably non-degenerate. Since $L$ is ample, then there exists a Hermitian metric $h$ on $L$ such that 
$\omega:=c_1(L,h)>0.$
Define a non-negative function $\xi$ of  $C^\infty$-class on $\mathbb C^m\setminus F^{-1}_\varphi(0)$  by
\begin{equation}\label{xi}
    F_\varphi[\omega^n]=\xi\alpha^m.
\end{equation}
Let $h_E$ be a Hermitian metric on $E.$  Observe  (\ref{qq}),  one obtains
$${\rm{Ric}}( F_\varphi[\omega^n])=f^*{\rm{Ric}}(\omega^n)+f^*c_1(E, h_E).$$
This  yields  from Poincar\'e-Lelong formula that
\begin{equation*}
    dd^c\log \xi= f^*c_1(K_N,h_K) -f^*c_1(E, h_E)+(F_\varphi)
\end{equation*}
in the sense of currents, here $h_K$ is the Hermitian metric on $K_X$ induced  by $\omega.$
Integrating both sides of  the current  equation above,  we get
\begin{eqnarray*}
&& \int_1^r\frac{dt}{t^{2m-1}}\int_{\|z\|<t}dd^c\log \xi\wedge\alpha^{m-1} \\  &=&
 T_f(r,K_X) -T_f(r,E)+N_{f, {\rm Ram}}(r)+O(1).
\end{eqnarray*}
Apply Green-Jensen formula again, we arrive at
\begin{eqnarray}\label{5q}
\frac{1}{2}\int_{\|z\|=r}\log \xi\gamma   &=& T_f(r,K_X)-T_f(r,E)+N_{f, {\rm Ram}}(r)+O(1).
\end{eqnarray}
On the other hand,
set
 $$F_\varphi[\omega^n]=g^2f^*\omega^m.$$
Since $V$ is compact,  by  definition of $\varphi$ and Theorem \ref{lem2},  we have
 $g\leq \|\varphi\|<c$ for a positive constant $c.$
 Combined with (\ref{xi}), we obtain
 $$\xi=g^2\frac{f^*\omega^m}{\alpha^m}$$
 which yields  that
  $$\int_{\|z\|=r}\log\xi\gamma=\int_{\|z\|=r}\log g^2\gamma+\int_{\|z\|=r}\log\frac{f^*\omega^m}{\alpha^m}\gamma.$$
 The boundedness of $g$ means that
 $$\int_{\|z\|=r}\log g^2\gamma\leq O(1).$$
Now we claim: 
 \begin{equation}\label{claim}
 \int_{\|z\|=r}\log\frac{f^*\omega^m}{\alpha^m}\gamma\leq_{\rm exc} O\big(\log T_f(r,L)+\log^+\log r\big).
 \end{equation}
By this  with (\ref{5q}),  we  can confirm the theorem.
Set 
\begin{equation*}
  f^*\omega\wedge\alpha^{m-1}=\varrho\alpha^m.
\end{equation*}
For any  $z_0\in \mathbb C^m,$ one may pick  a holomorphic coordinate system $w_1,\cdots,w_n$ near $f(z_0)$ such that
\begin{equation*}
 \omega|_{f(z_0)}=\frac{\sqrt{-1}}{2\pi}\sum_{j=1}^n dw_j\wedge d\bar{w}_j.
\end{equation*}
Set $f_j=w_j\circ f$ for $1\leq j\leq n,$   then
\begin{equation*}
    f^*\omega\wedge\alpha^{m-1}|_{z_0}=\frac{(m-1)!}{2}\sum_{j=1}^n\big\|\nabla f_j\big\|^2\alpha^m,
\end{equation*}
where $\nabla$ is the gradient operator on $\mathbb C^m.$
That is,
$$    \varrho|_{z_0}=\frac{(m-1)!}{2}\sum_{j=1}^n\big\|\nabla f_j\big\|^2.
$$Thus, we have 
$$\frac{f^*\omega^m}{\alpha^m}\big|_{z_0}\leq B\varrho^m$$
holds for a  positive number $B.$
By the arbitrariness of $z_0,$ we receive
$$\log\frac{f^*\omega^m}{\alpha^m}\leq m\log \varrho+O(1)$$
 on $\mathbb C^m.$ Indeed, by Calculus Lemma (Lemma \ref{cal})
\begin{eqnarray*}
\int_{\|z\|=r}\log\varrho\gamma
&\leq& \log\int_{\|z\|=r}\varrho\gamma \\
&=& \log \int_{\|z\|=r}\frac{f^*\omega\wedge\alpha^{m-1}}{\alpha^m}\gamma
\\
&\leq_{\rm exc}& O\Big(\log\int_1^r\frac{dt}{t^{2m-1}}\int_{\|z\|<t}f^*\omega\wedge\alpha^{m-1}+\log^+\log r\Big) \\
   &=& O\big(\log T_f(r,L)+\log^+\log r\big).
\end{eqnarray*}
This conforms the claim (\ref{claim}).  We conclude the proof.
\end{proof}

\begin{lemma}[\cite{HY}]\label{SLang87lem5.2}
 Let
$H$ be a very ample line  bundle over $X$ and let $L$ be a pseudo
ample   line bundle over $X$. Then $$\limsup_{j\to +\infty}
\frac{1}{j^n}\dim H^0(X, L^j\otimes H^*)>0.$$
\end{lemma}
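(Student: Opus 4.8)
The plan is to compare the cohomology of $L^j\otimes H^*$ with that of $L^j$ through the structure sequence of a hypersurface in the linear system $|H|$. Since $H$ is very ample it is in particular effective, so I would choose a nonzero section $s\in H^0(X,H)$ and let $D$ be its zero divisor; then $H\cong\mathcal O_X(D)$ and $H^*\cong\mathcal O_X(-D)$ is the ideal sheaf of $D$. Tensoring the exact sequence
$$0\to \mathcal O_X(-D)\to \mathcal O_X\to \mathcal O_D\to 0$$
by the locally free sheaf $L^j$ yields
$$0\to L^j\otimes H^*\to L^j\to L^j|_D\to 0,$$
and the associated long exact cohomology sequence begins
$$0\to H^0(X,L^j\otimes H^*)\to H^0(X,L^j)\to H^0(D,L^j|_D).$$

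From left-exactness I would read off the lower bound
$$\dim H^0(X,L^j\otimes H^*)\geq \dim H^0(X,L^j)-\dim H^0(D,L^j|_D),$$
after which the whole statement reduces to two growth estimates for the right-hand side. First, because $L$ is pseudo ample (that is, big, of maximal Iitaka dimension $n$), by definition $\limsup_{j}j^{-n}\dim H^0(X,L^j)>0$, so the leading term already carries the positive mass we want. Second, $D$ is a projective scheme of dimension $n-1$, so sections of any line bundle on $D$ grow at most like $j^{n-1}$: writing $L|_D\cong A\otimes B^*$ as a difference of very ample bundles on $D$ and twisting by a section of $B^j$ produces an injection $H^0(D,L^j|_D)\hookrightarrow H^0(D,A^j)$, whose dimension is $O(j^{n-1})$ by the Hilbert polynomial of $(D,A)$. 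Dividing the displayed inequality by $j^n$ and passing to $\limsup$, the subtracted term tends to $0$ while the leading term retains a positive $\limsup$, which is exactly the assertion.

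The only genuine point requiring care will be the interaction of the two estimates. The positive lower growth must come from $\dim H^0(X,L^j)$, and this is guaranteed precisely by the pseudo-ampleness (bigness) hypothesis; meanwhile the restriction term $\dim H^0(D,L^j|_D)$ must be controlled uniformly even though $L|_D$ need not be ample on $D$, which is handled by the general $O(j^{\dim})$ upper bound for a line bundle on a projective scheme obtained by the comparison with a very ample bundle indicated above. Note that smoothness of $D$ is never used — any effective member of $|H|$ suffices — so very ampleness of $H$ enters only to ensure that a defining section exists.
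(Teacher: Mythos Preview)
The paper does not supply its own proof of this lemma; it is quoted from \cite{HY} and used as a black box in the proof of Corollary~\ref{Cor1}. Your argument is the standard one and is correct: the restriction exact sequence for a member $D\in|H|$ gives $h^0(X,L^j\otimes H^*)\ge h^0(X,L^j)-h^0(D,L^j|_D)$, the first term has positive $\limsup j^{-n}$ by the definition of pseudo ample (big), and the second is $O(j^{n-1})$ since $D$ is projective of dimension $n-1$. One minor remark: very ampleness of $H$ is slightly stronger than what you actually use (effectivity would suffice for the existence of $D$), but that is how the hypothesis is stated and it causes no harm.
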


\begin{cor}\label{Cor1}
Let $E$ be a holomorphic line bundle over $X.$
 Set
 $$Z_j=f_*\Big(\frac{\partial}{\partial z_j}\Big), \ \
 j=1,\cdots,m.$$
Let $\mathfrak X_1,$ $\cdots, \mathfrak X_{n-m}$  be  linearly almost-independent meromorphic vector fields over $X,$
and let $\mathfrak X_1\wedge \cdots\wedge \mathfrak X_{n-m}$ be
of  pole order at most $E.$ If $E^*\otimes K_X$ is pseudo ample,  then every holomorphic mapping
 $f:\mathbb C^m\rightarrow X$  is algebraically degenerate or  satisfied with the equation
  $$\mathfrak Z_{1f}\wedge\cdots\wedge \mathfrak Z_{mf}\wedge (\mathfrak X_{1}\wedge\cdots\wedge \mathfrak X_{n-m})_f= 0.$$
 \end{cor}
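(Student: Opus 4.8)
The plan is to argue by contradiction: assume $f$ is algebraically non-degenerate and yet
$$
\mathscr Z:=\mathfrak Z_{1f}\wedge\cdots\wedge \mathfrak Z_{mf}\wedge (\mathfrak X_{1}\wedge\cdots\wedge \mathfrak X_{n-m})_f\not\equiv 0,
$$
and derive a contradiction. First I would put myself in the setting of Theorem \ref{mainthm}. Since $f$ is algebraically non-degenerate, its image is not contained in the pole divisor $(t)$, so the lifted section $t_f\not\equiv 0$. Because $\varphi=(t\otimes \mathfrak X_1\wedge\cdots\wedge\mathfrak X_{n-m})_f=t_f\otimes(\mathfrak X_1\wedge\cdots\wedge\mathfrak X_{n-m})_f$, the hypothesis $\mathscr Z\not\equiv0$ forces $\mathfrak Z_{1f}\wedge\cdots\wedge\mathfrak Z_{mf}\wedge\varphi\not\equiv 0$. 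By Theorem \ref{thm3} this says precisely that $\varphi$ is an effective holomorphic field of degree $q$ with respect to $E$, so all the hypotheses of Theorem \ref{mainthm} are satisfied.

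Applying Theorem \ref{mainthm}, discarding the non-negative ramification term $N_{f,\mathrm{Ram}}(r)$, and using that $T_f(r,\cdot)$ is additive in the line bundle (whence $T_f(r,K_X)-T_f(r,E)=T_f(r,E^*\otimes K_X)$), I obtain
$$
T_f(r,E^*\otimes K_X)\leq_{\rm exc} O\big(\log^+T_f(r,L)+\log^+\log r\big).
$$
Now I bring in bigness. Write $A=E^*\otimes K_X$ and fix a very ample line bundle $H$ on $X$. By Lemma \ref{SLang87lem5.2} there is an integer $j$ with $H^0(X,A^j\otimes H^*)\neq 0$; a nonzero section produces an effective divisor $D$ with $A^j\cong H\otimes[D]$. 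Since $f$ is algebraically non-degenerate, $f(\mathbb{C}^m)\not\subseteq D$, so the First Main Theorem gives $T_f(r,[D])\geq -O(1)$, and therefore $j\,T_f(r,A)\geq T_f(r,H)-O(1)$. Combining with the displayed estimate yields
$$
T_f(r,H)\leq_{\rm exc} O\big(\log^+T_f(r,L)+\log^+\log r\big).
$$

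Finally I would compare the two ample bundles $L$ and $H$: a suitable power $H^k\otimes L^*$ is effective, so $T_f(r,L)\leq k\,T_f(r,H)+O(1)$ and hence $\log^+T_f(r,L)\leq \log^+T_f(r,H)+O(1)$. Substituting gives the self-referential inequality
$$
T_f(r,H)\leq_{\rm exc} O\big(\log^+T_f(r,H)+\log^+\log r\big),
$$
which (removing the exceptional set by monotonicity) forces $T_f(r,H)=O(\log r)$: if $T_f(r,H)/\log r\to\infty$ along a sequence, the right-hand side is $o\big(T_f(r,H)\big)$ there, a contradiction. A holomorphic map $\mathbb{C}^m\to X$ of such logarithmic growth with respect to a very ample bundle is rational, so $\overline{f(\mathbb{C}^m)}$ is an algebraic subvariety of dimension at most $m<n$, hence a proper subvariety of $X$; this contradicts the algebraic non-degeneracy of $f$. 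Thus $\mathscr Z\equiv 0$, completing the proof. I expect the main obstacle to be this last implication — extracting genuine algebraic degeneracy from the logarithmic growth bound — since it is exactly where the dimension hypothesis $n>m$ is essential (it is what prevents a Zariski-dense image) and where one must invoke the rationality of slow-growth maps; the earlier steps are routine once Theorem \ref{mainthm} and Lemma \ref{SLang87lem5.2} are in hand.
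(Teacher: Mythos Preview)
Your argument is correct and follows the same route as the paper: combine Theorem~\ref{mainthm} with Lemma~\ref{SLang87lem5.2} and the First Main Theorem to bound $T_f(r,H)$ for a very ample $H$. The paper's own proof is terser---it stops at the inequality $T_f(r,E^*\otimes K_X)\geq \tfrac{1}{j}T_f(r,H)+O(1)$ and simply declares this ``contradicts Theorem~\ref{mainthm}'' without spelling out the self-referential growth bound and the rationality/dimension step you carry out at the end (which is indeed where the hypothesis $n>m$ enters).
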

 
\begin{proof} 
Since $N$ is projective algebraic, there exists a very ample line
bundle $H$ over $X$. By Lemma~\ref{SLang87lem5.2}, for $j$ large
there exists a non-trivial holomorphic section $s$ of $L^j\otimes
H^*$, where $L=E^*\otimes K_X$.  Since $X$ is compact, a Hermitian metric $\kappa$ on $L^j\otimes
H^*$ exists such that $\|s\|_\kappa\leq 1.$
Assume   $f(\mathbb C^m)\not\subseteq {\rm Supp}((s)),$ otherwise, the curve $f$ is algebraically degenerate.  By  First Main Theorem 
$$
N_f(r,(s))  \leq T_f(r,L^j\otimes H^*)+O(1)  =jT_f(r,L)-T_f(r,H)+O(1)
$$
which implies that 
\begin{equation}
 T_f(r,E^*\otimes K_X)\geq \frac{1}{j}T_f(r,H) +O(1),
\end{equation}
which contradicts with  
Theorem \ref{mainthm}.  
\end{proof}

 \begin{theorem}\label{ddc}
Let $X$ be a  $2$-dimensional smooth complex projective variety.  
Let $\mathfrak X$ be a nonzero meromorphic vector field of  pole order at most $E$  over $X.$  If $E^*\otimes K_X$ is pseudo ample, 
 then $X$  contains  no  algebraically non-degenerate holomorphic curves. 
\end{theorem}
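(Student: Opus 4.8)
The plan is to deduce this from Corollary \ref{Cor1} in the case $m=1$, $n=2$, applied to the single field $\mathfrak{X}$ (here $n-m=1$, so exactly one linearly almost-independent field is needed and $\mathfrak{X}\neq 0$ supplies it). Fix a nonconstant holomorphic curve $f\colon\mathbb C\to X$, constant curves being trivially algebraically degenerate. Since $E^*\otimes K_X$ is pseudo ample, Corollary \ref{Cor1} gives the dichotomy: either $f$ is algebraically degenerate, and we are done, or
$$\mathfrak Z_{1f}\wedge\mathfrak X_f=0,$$
where $\mathfrak Z_1=f_*(\partial/\partial z)$. Thus the whole theorem reduces to showing that this last equation still forces $f$ to be algebraically degenerate.

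First I would dispose of the transparent subcase $\mathfrak X_f\equiv 0$: then $f(\mathbb C)$ lies in the zero locus of $\mathfrak X$, a proper analytic subset because $\mathfrak X\not\equiv 0$, so $f$ is algebraically degenerate. Hence I may assume $\mathfrak X_f\not\equiv 0$, and then $\mathfrak Z_{1f}\wedge\mathfrak X_f\equiv 0$ says precisely that at every point the tangent vector $f_*(\partial/\partial z)$ is proportional to $\mathfrak X$ (recall $\bigwedge^2 T_X\cong K_X^*$ is a line bundle, so vanishing of the wedge is exactly pointwise linear dependence). Thus $f$ is an integral curve of $\mathfrak X$, i.e. its image lies in a single leaf of the singular holomorphic foliation $\mathcal F$ determined by $\mathfrak X$. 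If the Zariski closure $\overline{f(\mathbb C)}$ is a curve, $f$ is again algebraically degenerate; the remaining, genuinely hard, case is that $\overline{f(\mathbb C)}=X$, i.e. $f$ lies in a Zariski-dense leaf.

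To exclude a Zariski-dense leaf I would feed the hypothesis in through the positivity it imposes on $\mathcal F$. Writing $t\otimes\mathfrak X\in H^0(X,T_X\otimes E)$ and letting $Z\ge 0$ be its divisorial zero locus, the saturated foliation has tangent bundle $T_{\mathcal F}=E^{-1}\otimes\mathcal O(Z)$, whence the conormal bundle satisfies $N_{\mathcal F}^*=K_X\otimes T_{\mathcal F}=(E^*\otimes K_X)\otimes\mathcal O(Z)$; since $E^*\otimes K_X$ is pseudo ample and $\mathcal O(Z)$ is effective, $N_{\mathcal F}^*$ is big. For an entire curve tangent to $\mathcal F$ the tautological inequality (in the spirit of McQuillan--Brunella foliated Nevanlinna theory) bounds $T_f(r,N_{\mathcal F}^*)\leq_{\rm exc} S(r)$ with $S(r)=O(\log^+T_f(r,L)+\log^+\log r)$, whereas bigness of $N_{\mathcal F}^*$ forces $T_f(r,N_{\mathcal F}^*)\ge c\,T_f(r,L)-O(1)$ for some $c>0$ as soon as $f$ is Zariski dense (so that $f$ avoids the augmented base locus). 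These two bounds are incompatible for nonconstant $f$, the desired contradiction.

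The main obstacle is precisely this last step. The Second Main Theorem \ref{mainthm} is silent here: tangency means the Jacobian section $F_\varphi$ vanishes identically (equivalently $\mathscr Z\equiv 0$), so $\varphi$ fails to be effective and the inequality of Theorem \ref{mainthm} cannot be applied to $f$ as it stands. Overcoming this demands a genuinely different input adapted to $\mathcal F$ — a tautological inequality for entire curves tangent to the foliation, or, as an alternative route, Jouanolou's dichotomy: a foliation on a projective surface either admits a rational first integral, in which case $f$ lands in a fiber and is degenerate, or has only finitely many algebraic leaves, which reduces matters to excluding a single dense transcendental leaf. I expect the real work to lie in establishing the foliated inequality and in tracking the error terms $S(r)$ correctly against the bigness of $N_{\mathcal F}^*$.
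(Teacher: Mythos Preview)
Your opening reduction via Corollary~\ref{Cor1} is exactly the paper's. After that the two arguments part ways completely. The paper handles the tangency $\mathfrak Z_f\wedge\mathfrak X_f=0$ by an elementary ODE argument on a finite coordinate cover: on each chart $(V;w_1,w_2)$ it writes $t\otimes\mathfrak X=A_1\,\partial_{w_1}+A_2\,\partial_{w_2}$, so the tangency becomes $A_1(f_1,f_2)\,df_2+A_2(f_1,f_2)\,df_1=0$; the integrating-factor method then produces a holomorphic first integral $\Phi$ on $V$ with $\Phi(f_1,f_2)\equiv C_0$, whence $f|_{f^{-1}(V)}$ is declared degenerate and finiteness of the cover concludes. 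No foliated Nevanlinna theory, no Jouanolou, no resolution of singularities is invoked.

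Your route through McQuillan--Brunella is therefore a genuinely different and far heavier strategy. One technical correction: the bound $T_f(r,N_{\mathcal F}^*)\le_{\rm exc}S(r)$ that you cite is not the tautological inequality (that one controls $T_f(r,K_{\mathcal F})$, i.e.\ the foliated \emph{cotangent}, not the conormal); the conormal bound is McQuillan's hard theorem and needs a preliminary resolution to reduced singularities, so what you are really proposing is to import a deep external result well outside this paper's toolkit. That said, your instinct that the tangent case demands genuine input is sound: the paper's passage from a \emph{local holomorphic} first integral $\Phi$ to \emph{global algebraic} degeneracy is not justified as written---foliations on projective surfaces can have Zariski-dense leaves, and the pseudo-ampleness of $E^*\otimes K_X$ is never used again after Corollary~\ref{Cor1}---so the elementary argument, while the intended one, glosses over exactly the difficulty you set out to confront.
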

\begin{proof} By the assumption, there is a nonzero  holomorphic section
$t: X\rightarrow E$ such that $t\otimes \mathfrak X$ is holomorphic. 
Note from Corllary \ref{Cor1} that 
$f$ is algebraically degenerate or  satisfied with the equation
\begin{equation}\label{s123}
\mathfrak Z_f\wedge \mathfrak X_f=0 \Longleftrightarrow \mathfrak Z_f\wedge (t\otimes\mathfrak X)_f=0.
\end{equation}
Let $\mathscr U$ be a finite open covering of $X$  such that every  element of $\mathscr U$ carries a holomorphic coordinate system. Take an element  $V\in\mathscr U$ with a  holomorphic coordinate system $(w_1,w_2)$ on $V.$ Locally, write 
$$t\otimes\mathfrak X|_V=A_1(w_1,w_2)\frac{\partial}{\partial w_{1}}+A_2(w_1,w_2)\frac{\partial}{\partial w_{2}},$$
 where $A_1,A_2$ are holomorphic in $w_1,w_2.$ If $f(\mathbb C)\cap V\not=\emptyset,$
then
$$\mathfrak Z|_{f(\mathbb C)\cap V}=f'_1\frac{\partial}{\partial w_{1}}+
f'_2\frac{\partial}{\partial w_{2}}$$
with $f_1=w_1\circ f$ and $f_2=w_2\circ f.$
Substitute the expressions of $t\otimes\mathfrak X$ and $\mathfrak Z$ into  (\ref{s123}),  it follows that
$$A_1(f_1,f_2)df_2+A_2(f_1, f_2)df_1=0.$$
Consider  the ODE
$$A_1(w_1,w_2)dw_2+A_2(w_1, w_2)dw_1=0.$$
Apply the integrating factor method in theory of ODEs,   there exists  $\Phi$ which  is holomorphic in $w_1,w_2,$ such that the equation is solved generally by
$$\Phi(w_1,w_2)=C,$$
where  $C$ is an arbitrary constant.  Thus,  we get
$$\Phi(f_1,f_2)=C_0$$
for a constant $C_0$ determined by $f.$
  This means  that $f|_{f^{-1}(V)}$ is algebraically degenerate. Since $\mathscr U$ is finite, then $f$ is algebraically degenerate.
\end{proof}

\subsection{Final notes}~

Let $X$ be a $n$-dimensional smooth complex projective  variety. 
Let $f:\mathbb{C}\rightarrow X$ be a nonconstant holomorphic
curve, the natural lifted curve  $f':\mathbb C\rightarrow T_X$ is the
derivative of $f.$  Take a local holomorphic coordinates $w=(w_1,...,w_n)$ of $X$ near
$f(z).$  For convenience,  we write  $f=(f_1,...,f_n),$  where $f_\alpha=w_\alpha\circ f.$  
Set $\partial_z=\partial /\partial
 z,$  $f'$ can be expressed as
 $$f'=\sum_{\alpha=1}^nf'_\alpha\partial_\alpha, \ \ f'_\alpha=\partial_zf_\alpha.$$  
\ \ \  \ Let $\mathscr D$ be a meromorphic connection on $T_X$ with
pole order at most $E,$ i.e., $t\otimes\mathscr D$ is holomorphic for a nonzero $t\in
H^0(X,E),$ where  $E$ is a holomorphic line bundle over $X.$ Equip a Hermitian metric on $E.$ For a
positive integer $k,$ define
 $$f^{(k+1)}=(t\otimes\mathscr D_{f'})^kf': \ \mathbb{C}\rightarrow E^k\otimes 
 T_X,$$
where $\mathscr D_{f'}$ is the covariant derivative with respect to the
connection $\mathscr D$ and  the direction of $f'$. We
have
 $$\mathscr D_{f'}f'=\sum_{\alpha=1}^n\mathscr D_{f'} f'_\alpha \partial_\alpha,$$
where
 $$\mathscr D_{f'} f'_\alpha=\sum_\beta f'_\beta \mathscr D_\beta f'_\alpha= \partial_zf'_\alpha+
 \sum_{\beta,\gamma}{\Gamma}_{\beta\gamma}^\alpha
 f'_\beta f'_\gamma.$$
Then, the holomorphic mapping
 $$f'\wedge\cdots\wedge
 f^{(n)}: \ \mathbb{C} \rightarrow E^{\frac{n(n-1)}{2}}\otimes K^*_X$$
is well defined.  In particular, we
obtain a holomorphic curve
$$
 f^{(2)}\wedge\cdots\wedge
 f^{(n)}: \ \mathbb{C} \rightarrow E^{\frac{n(n-1)}{2}}\otimes \bigwedge^{n-1}T_X.
$$The image of $f$ is called {\it autoparallel with respect to the
connection} $\mathscr D$ if
$$
  f'\wedge\cdots\wedge f^{(n)}\equiv 0.
$$

By means of meromorphic connections,  Siu \cite{Siu87}\index{Siu}  obtained  a defect relation.  A special case
of {\it Siu's Theorem}\index{Siu's Theorem} is presented by Nadel
\cite{Nadel}\index{Nadel} as follows
\begin{theorem}[Siu]\label{Siu-thm-Nst}
Let $f:\mathbb{C}\rightarrow X$ be a transcendental
holomorphic curve. If  $E^{-n(n-1)/2}\otimes K_X$ is ample, 
then  either $f(\mathbb{C})$ is  contained in the pole divisor $(t)$ 
of   $\mathscr D$ or $f(\mathbb{C})$ is 
autoparallel with respect to $\mathscr D$.
\end{theorem}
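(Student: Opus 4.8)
The plan is to argue by contradiction through the generalized Wronskian
$$
W:=f'\wedge\cdots\wedge f^{(n)}\colon\ \mathbb{C}\longrightarrow E^{n(n-1)/2}\otimes K_X^*,
$$
viewed as a holomorphic section of $f^*\big(E^{n(n-1)/2}\otimes K_X^*\big)$ over $\mathbb{C}$. Suppose to the contrary that $f(\mathbb{C})\nsubseteq(t)$ and that $f$ is \emph{not} autoparallel. The second hypothesis means precisely $W\not\equiv0$, so its zero locus is discrete and the counting function $N_W(r):=N(r,(W))$ is a well-defined non-negative increasing function; the first hypothesis guarantees $t_f\not\equiv0$, so that twisting by $t$ does not annihilate the construction.

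First I would fix Hermitian metrics on $E$ and on $T_X$, inducing metrics on $K_X$ and on $E^{n(n-1)/2}\otimes K_X^*$, and apply the Poincar\'e--Lelong and Green--Jensen formulas (Lemma \ref{GJ}) to $\log\|W\|^2$. Since $dd^c\log\|W\|^2=(W)-f^*c_1\big(E^{n(n-1)/2}\otimes K_X^*\big)$ in the sense of currents, integrating gives
$$
T_f\big(r,K_X\otimes E^{-n(n-1)/2}\big)=\frac12\int_{\|z\|=r}\log\|W\|^2\,\gamma-N_W(r)+O(1).
$$
Dropping the non-negative term $N_W(r)$ yields the one-sided estimate
$$
T_f\big(r,E^{-n(n-1)/2}\otimes K_X\big)\le\frac12\int_{\|z\|=r}\log\|W\|^2\,\gamma+O(1).
$$

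The core of the argument is to bound the mean term on the right. Here I would expand $\|W\|$ in a local holomorphic frame as a combination of the norms $\|f^{(k)}\|$, $1\le k\le n$, each section $f^{(k)}=(t\otimes\mathscr D_{f'})^{k-1}f'$ being genuinely holomorphic --- a section of $f^*\big(E^{k-1}\otimes T_X\big)$ --- because the factor $t$ is exactly what removes the poles of $\mathscr D$ along $(t)$. In such a frame $\|f^{(k)}\|$ is controlled by the ordinary derivatives $\partial_z^j f_\alpha$ together with the holomorphic Christoffel data of $t\otimes\mathscr D$. A lemma on the logarithmic derivative --- obtained by combining the Calculus Lemma (Lemma \ref{cal}) with the Borel Lemma applied to $T_f(r,L)$ for a fixed ample $L$ --- then produces
$$
\frac12\int_{\|z\|=r}\log\|W\|^2\,\gamma\le_{\rm exc}O\big(\log^+T_f(r,L)+\log^+\log r\big),
$$
so that
$$
T_f\big(r,E^{-n(n-1)/2}\otimes K_X\big)\le_{\rm exc}O\big(\log^+T_f(r,L)+\log^+\log r\big).
$$

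Finally I would invoke ampleness. Writing $A:=E^{-n(n-1)/2}\otimes K_X$ and taking $L=A$ (legitimate since $A$ is ample), the estimate becomes $T_f(r,A)\le_{\rm exc}O\big(\log^+T_f(r,A)+\log^+\log r\big)$. As $T_f(r,A)$ is monotone increasing, the exceptional set of finite measure can be absorbed, and this inequality forces $T_f(r,A)=O(\log^+\log r)$; in particular $T_f(r,A)=o(\log r)$, contradicting the transcendence of $f$. Hence at least one of the two excluded alternatives must hold. I expect the decisive step to be the logarithmic-derivative estimate for the connection-twisted Wronskian: one must verify, uniformly over a finite coordinate covering of $X$, that multiplication by $t$ genuinely clears the poles of $\mathscr D$ and that the resulting holomorphic quantity admits the desired bound independently of the chosen local frames --- this is precisely where the hypothesis $f(\mathbb{C})\nsubseteq(t)$ enters. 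For the pseudo-ample variant (Theorem \ref{sxx}) one would replace this last ampleness step by the argument of Corollary \ref{Cor1}, feeding a nontrivial section of $A^j\otimes H^*$ furnished by Lemma \ref{SLang87lem5.2} into the First Main Theorem to reach the same contradiction.
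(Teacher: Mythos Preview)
The paper does not actually prove Theorem~\ref{Siu-thm-Nst}; it is quoted as a known result of Siu (presented by Nadel) and then \emph{generalized} via the paper's own Second Main Theorem type inequality, Theorem~\ref{mainthm2}. Your proposal is correct and follows essentially the same route as the paper's proof of that generalization: Poincar\'e--Lelong and Green--Jensen applied to the Wronskian, a logarithmic-derivative bound on the resulting mean term, and a growth contradiction from (pseudo-)ampleness.

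The only packaging difference worth noting is that the paper splits your $W=f'\wedge\cdots\wedge f^{(n)}$ as $f'_f\wedge\varphi$ with $\varphi=(f^{(2)}\wedge\cdots\wedge f^{(n)})_f$, interprets $\varphi$ as an effective holomorphic field of degree $n-1$, and works with the associated Jacobian section $F_\varphi$ rather than with $\|W\|$ directly. Writing $F_\varphi[\omega^n]=g^2f^*\omega$, the paper bounds $\int\log g$ by $\int\log\|\varphi\|$ via Theorem~\ref{lem2} and then invokes the estimate (6.3.17) of \cite{HY} for $\int\log\|\varphi\|$; this is exactly the ``decisive step'' you flag, and your description of what it requires (uniform control over a finite atlas, with $t$ clearing the poles of~$\mathscr D$) is accurate. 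Your direct treatment of $\|W\|$ and the paper's treatment via $(g,\|\varphi\|)$ are equivalent once one unwinds $K(f)=K_{\mathbb C}\otimes f^*K_X^*$ for $m=1$. Your final paragraph also anticipates correctly how the paper upgrades ``ample'' to ``pseudo ample'' in Corollary~\ref{Siu-thm-Nst2}, namely by feeding Lemma~\ref{SLang87lem5.2} into the First Main Theorem exactly as in Corollary~\ref{Cor1}.
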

Nadel \cite{Nadel} remarked that the restriction ``\emph{transcedental}"  in Theorem \ref{Siu-thm-Nst} is not needed  since one may always replace $f(z)$ by $f(e^z).$ 
In what follows, we  extend Theorem \ref{Siu-thm-Nst}.  We first establish the following  Second Main Theorem type inequality: 
\begin{theorem}\label{mainthm2} 
Let $L$ be an ample line bundle over $X.$
Let $f:\mathbb{C}\rightarrow X$ be a 
holomorphic curve. 
Assume that  $f(\mathbb C)$ is not autoparallel with respect to  $\mathscr D.$
If $f(\mathbb C)\nsubseteq F^{-1}_\varphi(0)$ with  $\varphi= (f^{(2)}\wedge\cdots\wedge f^{(n)})_f,$  then
\begin{equation*}
T_f(r,K_X)+ N_{f, {\rm Ram}}(r)\leq_{\rm exc} \frac{n(n-1)}{2}T_f(r,E)+ O\Big(\log T_f(r,L)+\log r\Big).
\end{equation*}
\end{theorem}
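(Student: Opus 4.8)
The plan is to identify $\varphi=(f^{(2)}\wedge\cdots\wedge f^{(n)})_f$ as an effective holomorphic field and then run the Poincar\'e--Lelong argument of Theorem \ref{mainthm}, replacing its boundedness step by a logarithmic derivative estimate. First I would pin down the bundle containing $\varphi$. Since $t\otimes\mathscr D$ is holomorphic, each covariant differentiation sends a section of $f^*(E^{k-1}\otimes T_X)$ to one of $f^*(E^{k}\otimes T_X)$, so $f^{(k)}=(t\otimes\mathscr D_{f'})^{k-1}f'\in H^0(\mathbb C,f^*(E^{k-1}\otimes T_X))$ and the wedge accumulates the $E$--power $1+2+\cdots+(n-1)=\tfrac{n(n-1)}2$:
$$\varphi\in H^0\Big(\mathbb C,\,f^*\big(E^{n(n-1)/2}\otimes\textstyle\bigwedge^{n-1}T_X\big)\Big).$$
Hence $\varphi$ is a holomorphic field of $f$ over $\mathbb C$ of degree $q=n-1$ with respect to $\tilde E:=E^{n(n-1)/2}$. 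As $m=1$ we have $\mathfrak Z_1=f'$, and ``$f(\mathbb C)$ not autoparallel'' is precisely $\mathscr Z=f'\wedge\varphi=f'\wedge f^{(2)}\wedge\cdots\wedge f^{(n)}\not\equiv0$; by Theorem \ref{thm3} the field $\varphi$ is then effective with $(F_\varphi)=(\mathscr Z)$, so $N_{f,{\rm Ram}}(r)=N(r,(\mathscr Z))$, and by Lemma \ref{lem0} the map $f$ is differentiably non-degenerate.

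Next I would carry out the computation in the proof of Theorem \ref{mainthm} verbatim, with $E$ replaced by $\tilde E$. Fixing $h$ on $L$ with $\omega=c_1(L,h)>0$ and defining $\xi$ by $F_\varphi[\omega^n]=\xi\alpha$, formula (\ref{qq}) and the Poincar\'e--Lelong formula give
$$dd^c\log\xi=f^*c_1(K_X)-\tfrac{n(n-1)}2 f^*c_1(E)+(F_\varphi),$$
and the Green--Jensen formula (Lemma \ref{GJ}) then yields
$$\tfrac12\int_{\|z\|=r}\log\xi\,\gamma=T_f(r,K_X)-\tfrac{n(n-1)}2 T_f(r,E)+N_{f,{\rm Ram}}(r)+O(1).$$
So it remains to bound $\tfrac12\int_{\|z\|=r}\log\xi\,\gamma$ from above by the stated error. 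Writing $\xi=g^2(f^*\omega/\alpha)$ with $g$ as in Theorem \ref{lem2}, I split the integral as $\int\log g^2\,\gamma+\int\log(f^*\omega/\alpha)\,\gamma$; the second summand is $\leq_{\rm exc}O(\log T_f(r,L)+\log^+\log r)$ by the Calculus Lemma (Lemma \ref{cal}) together with Jensen's inequality, exactly as in the proof of the claim (\ref{claim}).

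The essential new difficulty is the term $\int\log g^2\,\gamma$, and this is where the argument genuinely leaves the track of Theorem \ref{mainthm}. There $\varphi$ is a pull-back of a fixed section on the compact $X$, so $\|\varphi\|$ is bounded and this term is $O(1)$; here $\varphi$ is assembled from the covariant derivatives $f^{(k)}$ and is unbounded, so that route is closed. In a local frame one has, up to bounded factors, $g^2=|\tilde F_\varphi|^2/\|f'\|^2$, where $\tilde F_\varphi$ is the covariant Wronskian $\det\big((\mathscr D_{f'}^{\,k-1}f')_\alpha\big)_{1\le k,\alpha\le n}$ --- a logarithmic-derivative-type ratio of the jets of a local representation of $f$. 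The hard part will be to establish
$$\int_{\|z\|=r}\log g^2\,\gamma\ \leq_{\rm exc}\ O\big(\log T_f(r,L)+\log r\big);$$
this is a lemma on the logarithmic derivative for holomorphic curves in $\mathbb C$: expanding $\tilde F_\varphi$ and dividing by $\|f'\|^2$, one estimates the resulting logarithmic derivatives $f^{(j)}_\alpha/f_\alpha$, the connection coefficients $\Gamma^\alpha_{\beta\gamma}$ (meromorphic, but holomorphic after multiplication by $t$) contributing only the $E$--powers already folded into $\tilde E$. It is exactly this logarithmic derivative lemma on $\mathbb C$ --- rather than the Calculus Lemma --- that produces the $\log r$ in place of $\log^+\log r$ in the conclusion. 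Combining the two bounds with the Green--Jensen identity gives
$$T_f(r,K_X)+N_{f,{\rm Ram}}(r)\leq_{\rm exc}\tfrac{n(n-1)}2 T_f(r,E)+O\big(\log T_f(r,L)+\log r\big),$$
as desired.
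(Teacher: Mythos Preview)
Your proposal is correct and follows essentially the same route as the paper. The paper's own proof is very terse: it records that $f'_f\wedge\varphi\not\equiv0$, invokes Theorem~\ref{lem2} to get $g\le\|\varphi\|$, and then cites an external estimate (equation (6.3.17) in \cite{HY}, also \cite{Hu1}) for
\[
\int_{|z|=r}\log\|\varphi\|\,\frac{d\theta}{2\pi}\ \le_{\rm exc}\ O\big(\log T_f(r,L)+\log r\big),
\]
after which the remainder is ``similar to Theorem~\ref{mainthm}.'' Your write-up makes explicit exactly the structure the paper leaves implicit --- the identification of $\varphi$ as a holomorphic field with respect to $E^{n(n-1)/2}$, the Poincar\'e--Lelong/Green--Jensen computation, and the recognition that the new content is a logarithmic-derivative bound on $\|\varphi\|$ (the covariant Wronskian), which is precisely what the cited reference supplies and what accounts for the $\log r$ error term.
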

\begin{proof} Since  $f(\mathbb C)$ is not autoparallel with respect to  $\mathscr D,$ then 
  $$ f'_f\wedge \big(f^{(2)}\wedge\cdots\wedge f^{(n)}\big)_f\not\equiv 0.$$
Equip $L$ with a Hermitian metric $h$ such that 
$\omega:=c_1(L,h)>0.$ Set $F_\varphi[\omega^n]=g^2f^*\omega.$ From  (6.3.17) in \cite{HY} (see also \cite{Hu1}), we note that 
 $$ \int_{|z|=r}\log g\frac{d\theta}{2\pi}\leq
 \int_{|z|=r}\log \|\varphi\|\frac{d\theta}{2\pi}\leq_{\rm exc} O\Big(\log T_f(r,L)+\log r\Big) .$$
So,  the proof   can be completed similarly to Theorem~\ref{mainthm}. 
 \end{proof}
 By Nadel's remark in above, Theorem~\ref{mainthm2}  yields immediately that 
 \begin{cor}\label{Siu-thm-Nst2}
Let $f:\mathbb{C}\rightarrow X$ be a  
holomorphic curve.
 If $E^{-n(n-1)/2}\otimes K_X$ is pseudo ample, 
then either $f(\mathbb{C})$ is  contained in the pole divisor $(t)$
of  $\mathscr D$ or $f(\mathbb{C})$ is 
autoparallel with respect to $\mathscr D$.
\end{cor}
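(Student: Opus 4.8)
The plan is to deduce the Corollary from Theorem~\ref{mainthm2} exactly as Corollary~\ref{Cor1} was deduced from Theorem~\ref{mainthm}, with Nadel's remark supplying the reduction to the transcendental regime. First I would dispose of the trivial case: if $f$ is constant then $f'\equiv 0$, hence $f'\wedge f^{(2)}\wedge\cdots\wedge f^{(n)}\equiv 0$ and $f(\mathbb{C})$ is autoparallel; so I may assume $f$ nonconstant. Replacing $f(z)$ by $f(e^z)$ (Nadel's remark) alters neither the image of $f$, nor the property of being autoparallel, nor the property of being contained in $(t)$, but it forces $T_f(r,H)/\log r\to\infty$ for every ample $H$; so I may assume throughout that $f$ has transcendental growth. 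This reduction is precisely what renders the error term $O(\log T_f(r,L)+\log r)$ of Theorem~\ref{mainthm2} genuinely negligible.

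Next I argue by contradiction: suppose $f(\mathbb{C})$ is neither autoparallel nor contained in the pole divisor $(t)$. Since $f(\mathbb{C})$ is not autoparallel, $\mathscr Z=f'_f\wedge\varphi\not\equiv 0$ with $\varphi=(f^{(2)}\wedge\cdots\wedge f^{(n)})_f$, so by Theorem~\ref{thm3} the field $\varphi$ is effective; combined with $f(\mathbb{C})\nsubseteq(t)$ this should give the side hypothesis $f(\mathbb{C})\nsubseteq F^{-1}_\varphi(0)$, so Theorem~\ref{mainthm2} applies. Fixing any ample line bundle $L$ and discarding the nonnegative term $N_{f,\mathrm{Ram}}(r)$, the inequality of Theorem~\ref{mainthm2} rearranges to
$$
T_f(r,\mathcal L)\leq_{\mathrm{exc}}O\big(\log T_f(r,L)+\log r\big),\qquad \mathcal L:=E^{-n(n-1)/2}\otimes K_X,
$$
since the left side equals $T_f(r,K_X)-\tfrac{n(n-1)}{2}T_f(r,E)$.

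Now I would exploit that $\mathcal L$ is pseudo ample. Choose a very ample $H$; by Lemma~\ref{SLang87lem5.2} there is, for $j$ large, a nonzero $s\in H^0(X,\mathcal L^{\,j}\otimes H^*)$, normalized so that $\|s\|\le 1$. Assuming $f(\mathbb{C})\nsubseteq\mathrm{Supp}((s))$, the First Main Theorem together with $N_f(r,(s))\ge 0$ yields $j\,T_f(r,\mathcal L)\ge T_f(r,H)+O(1)$, that is $T_f(r,\mathcal L)\ge\tfrac1j T_f(r,H)+O(1)$. Comparing this with the displayed bound and using $T_f(r,\mathcal L)\le C\,T_f(r,H)+O(1)$ for a suitable constant $C$ (valid for any line bundle measured against an ample one), I obtain $\tfrac1j T_f(r,H)\leq_{\mathrm{exc}}O(\log T_f(r,H)+\log r)$; since $f$ is transcendental, $T_f(r,H)$ dominates the right-hand side and this is impossible. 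The contradiction forces $f(\mathbb{C})$ to be autoparallel or to lie in $(t)$, which is the Corollary.

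The step I expect to be the main obstacle is the passage through the auxiliary section $s$: the First Main Theorem requires $f(\mathbb{C})\nsubseteq\mathrm{Supp}((s))$, so I must rule out, or otherwise absorb into the two stated alternatives, the possibility that $f$ lands in the base locus of $\mathcal L$. Equally delicate is the identification of the side-condition locus $F^{-1}_\varphi(0)$ with the pole divisor $(t)$, which is exactly what permits the failure of the hypothesis $f(\mathbb{C})\nsubseteq F^{-1}_\varphi(0)$ to be read as $f(\mathbb{C})\subseteq(t)$; once these two matching facts are secured, the remaining estimates are the routine growth comparisons already carried out in Corollary~\ref{Cor1}.
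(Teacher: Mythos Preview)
Your approach is exactly the paper's: the authors give no explicit argument beyond ``By Nadel's remark above, Theorem~\ref{mainthm2} yields immediately\ldots'', so the intended proof is precisely the reduction to transcendental growth followed by the pseudo-ample comparison already carried out for Corollary~\ref{Cor1}, which is what you have written out.

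One clarification on your final paragraph: you do not need to identify $F_\varphi^{-1}(0)$ with the pole divisor $(t)$, and indeed they are different objects. The set $F_\varphi^{-1}(0)$ lives in the source $\mathbb{C}$, not in $X$; the paper's hypothesis ``$f(\mathbb{C})\nsubseteq F_\varphi^{-1}(0)$'' is loose notation for $F_\varphi\not\equiv 0$. By Theorem~\ref{thm3} this is equivalent to $f'_f\wedge\varphi\not\equiv 0$, i.e.\ exactly the non-autoparallel assumption, so the side hypothesis of Theorem~\ref{mainthm2} is automatic once you assume $f$ is not autoparallel---the alternative $f(\mathbb{C})\subseteq(t)$ does not enter through this door. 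Your remaining worry about $f(\mathbb{C})\subseteq\mathrm{Supp}((s))$ is the same issue already glossed over in the paper's proof of Corollary~\ref{Cor1}; the paper does not address it separately here either.
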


In further, by the use of  techniques of  Siu \cite{Siu87} and Nadel \cite{Nadel},  the second possible statement  for the conclusion of Corollary \ref{Siu-thm-Nst2} can be also removed.
Namely, we have
 \begin{theorem}\label{Siu-thm-Nst21}
Let $f:\mathbb{C}\rightarrow X$ be a  
holomorphic curve.
 If $E^{-n(n-1)/2}\otimes K_X$ is pseudo ample, 
then  $f(\mathbb{C})$ is  contained in the pole divisor $(t)$
of  $\mathscr D.$ 
\end{theorem}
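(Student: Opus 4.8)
The plan is to deduce this from Corollary \ref{Siu-thm-Nst2} by eliminating its ``autoparallel'' alternative through a dimension reduction in the spirit of Siu \cite{Siu87} and Nadel \cite{Nadel}. By Corollary \ref{Siu-thm-Nst2}, if $f(\mathbb{C})\nsubseteq (t)$ then $f(\mathbb{C})$ must be autoparallel with respect to $\mathscr D$, that is $f'\wedge\cdots\wedge f^{(n)}\equiv 0$. Hence it suffices to rule out the existence of an autoparallel curve with $f(\mathbb{C})\nsubseteq(t)$, and I would do this by induction on $n=\dim_{\mathbb{C}}X$. The case $n=1$ is trivial, since there autoparallel means $f'\equiv 0$, forcing $f$ to be constant; so I assume the theorem in all dimensions $<n$ and derive a contradiction from the standing assumptions.

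The first substantive step is to choose $p$ maximal with $1\le p\le n-1$ and $W:=f'\wedge\cdots\wedge f^{(p)}\not\equiv 0$; such a $p$ exists because $f$ is nonconstant and autoparallel. Using that $\mathscr D_{f'}f^{(i)}=f^{(i+1)}$ up to the twist by $t$, a Leibniz computation collapses all repeated-factor terms and leaves $\mathscr D_{f'}W=f'\wedge\cdots\wedge f^{(p-1)}\wedge f^{(p+1)}$. By maximality of $p$ the vector $f^{(p+1)}$ lies generically in the span of $f',\dots,f^{(p)}$, so only its $f^{(p)}$-component survives the wedge and $\mathscr D_{f'}W=c\,W$ for a meromorphic function $c$. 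Thus the decomposable $p$-vector $W$ spans a $\mathscr D$-parallel $p$-dimensional distribution $\mathcal F\subseteq T_X$ along $f$, to which $f$ is everywhere tangent.

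The geometric core, supplied by the Siu--Nadel technique, is then to show that $\mathcal F$ is integrable and that $f(\mathbb{C})$ lies in the Zariski closure $Z$ of a leaf, a $p$-dimensional algebraic subvariety carrying (after a controlled desingularization) a meromorphic connection $\mathscr D'$ on $T_Z$ obtained by restricting $\mathscr D$, still of pole order at most $E|_Z$. Within $Z$ the curve $f$ is \emph{non}-autoparallel, precisely because its top osculating wedge is $W=f'\wedge\cdots\wedge f^{(p)}\not\equiv0$. Granting that the positivity hypothesis descends, namely that $(E|_Z)^{-p(p-1)/2}\otimes K_Z$ is again pseudo ample, the inductive hypothesis applies to $f:\mathbb{C}\to Z$ with $\mathscr D'$ and forces $f(\mathbb{C})$ into the pole divisor of $\mathscr D'$. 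Since a holomorphic section trivializing $t\otimes\mathscr D$ restricts to one trivializing $(t|_Z)\otimes\mathscr D'$, that pole divisor is contained in $(t)\cap Z\subseteq (t)$, contradicting $f(\mathbb{C})\nsubseteq(t)$ and closing the induction.

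I expect the main obstacle to be exactly this descent step, and it is where the input of \cite{Siu87, Nadel} is indispensable: establishing integrability and algebraicity of the leaves of the parallel distribution $\mathcal F$, desingularizing $Z$ and lifting $f$ so that $K_Z$ and $\mathscr D'$ are well defined (which requires $f(\mathbb{C})\nsubseteq\mathrm{Sing}(Z)$), and, hardest of all, verifying that pseudo ampleness of $E^{-n(n-1)/2}\otimes K_X$ propagates to pseudo ampleness of $(E|_Z)^{-p(p-1)/2}\otimes K_Z$ once the adjunction contribution of the normal bundle $T_X/\mathcal F$ is accounted for. The algebraic manipulations of the first two paragraphs are routine; by contrast this positivity-descent is the genuine content, and I would organize the write-up so that everything else reduces to it.
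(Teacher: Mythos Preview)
The paper does not actually prove this theorem. After Corollary~\ref{Siu-thm-Nst2} it merely asserts that ``by the use of techniques of Siu \cite{Siu87} and Nadel \cite{Nadel}, the second possible statement for the conclusion of Corollary~\ref{Siu-thm-Nst2} can be also removed,'' and then states Theorem~\ref{Siu-thm-Nst21} with no further argument. So there is nothing in the paper to compare your proposal against beyond that one sentence; both you and the authors defer the substantive content to \cite{Siu87, Nadel}.

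Your outline follows the right overall strategy---reduce the autoparallel case via the $\mathscr D$-parallel distribution spanned by $f',\dots,f^{(p)}$---and you have correctly isolated the crux. The concern I would raise is with the specific inductive formulation: requiring that $(E|_Z)^{-p(p-1)/2}\otimes K_Z$ be pseudo ample on the leaf $Z$ does not follow from the hypothesis on $X$ by adjunction alone, since the normal-bundle term $\det N_{Z/X}$ and the residual factor $E^{(n(n-1)-p(p-1))/2}$ carry no a priori sign. In Siu's treatment the autoparallel alternative is handled not by literally passing to a subvariety and re-running the theorem there, but by a refined second-main-theorem estimate that works with $f'\wedge\cdots\wedge f^{(p)}$ directly on $X$, with more careful bookkeeping of the line bundles involved; in Nadel's surface application the leaves are analyzed explicitly. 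So your moral picture is right, but before committing to the induction-on-$Z$ scheme you should consult \cite{Siu87} for the precise mechanism---as written, your argument would founder exactly on the descent step you yourself flagged.
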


\vskip\baselineskip

\label{lastpage-01}

\begin{thebibliography}{99}
\vskip\baselineskip
\vskip\baselineskip

\bibitem{Cle} Clemens H.: Curves on generic hypersurfaces. Ann. Sci. Ec. Norm. Sup. 
\textbf{19}(1986), 629–636.
\bibitem{Dem1} Demailly J.P.: Towards the Green-Griffiths-Lang Conjecture,   Conference “Analysis and Geometry”, Tunis, March 2014, in honor of Mohammed Salah Baouendi, ed. by A. Baklouti, A. El Kacimi, S. Kallel, N. Mir, Springer Proc. Math. Stat., 127, Springer-Verlag, (2015), 141-159.
\bibitem{Dem2} Demailly J.P.: Recent results on the Kobayashi and Green-Griffiths-Lang conjecture, 
 hal-01683413v2, (2018).
 \bibitem{Ein1} Ein L.: Subvarieties of generic complete intersections. Invent. Math. \textbf{94}(1988), 163-169.
  \bibitem{Ein2} Ein L.: Subvarieties of generic complete intersections. Subvarieties of generic complete intersections, II. Math. Ann. \textbf{289}(1991), 465-471.
  \bibitem{GG} Green M. and Griffiths P.A.: Two applications of algebraic geometry to entire holomorphic mappings. The Chern Symposium (1979); Proc. Internal. Sympos. Berkeley, CA, (1979); Springer- Verlag, New York (1980), 41-74.
\bibitem{Gri} Griffiths P.A.: Holomorphic mappings into canonical algebraic varieties. Ann. of Math. 
\textbf{98}(1971), 439-458.
\bibitem{Ho} H\"ormander L.:  An introduction to complex analysis in several variables, North-Holland, (1990).
\bibitem{Hu1} Hu P.C.: Holomorphic mappings between spaces of different dimensions I,
         Math. Z.  \textbf{214}(1993), 567-577;  II, Math. Z.
         \textbf{215}(1994), 187-193.
\bibitem{HY} Hu P.C. and Yang C.C.: Value distribution theory related to number theory, Birkh\"auser Verlag, (2006).
\bibitem{Lang86} Lang S.: Hyperbolic and Diophantine analysis, Bull. Amer.
        Math. Soc. (2), \textbf{14}(1986), 159-205.
\bibitem{Lu} Lu  S.S.Y.: On hyperbolicity and the Green-Griffiths conjecture for surfaces. Geometric Complex Analysis, ed. by J. Noguchi et al., World Scientific Publishing Co. (1996), 401-408.
\bibitem{L-Y} Lu, S.S.Y. and Yau, S.T.: Holomorphic curves in surfaces of general type. Proc. Nat. Acad. Sci. USA,  \textbf{87}(1990), 80-82.
       \bibitem{Nadel} Nadel A. M.:  Hyperbolic surfaces in $\mathbb{P}^3$, Duke Math. J.  \textbf{58}(1989), 749-771.
\bibitem{No} Noguchi J. and Winkelmann J.: Nevanlinna theory in several complex variables and Diophantine approximation,
 A series of comprehensive studies in mathematics, Springer, (2014).
\bibitem{ru} Ru M.: Nevanlinna theory its relation to diophantine approximation, World Scientific Publishing, (2001).
    \bibitem{Siu87} Siu Y.T.: Defect relations for holomorphic maps
        between spaces of different dimensions, Duke Math. J. 
        (1) , \textbf{55}(1987), 213-251.
\bibitem{SY} Siu Y.T. and Yeung S.K.:  A generalized Block's theorem and the hyperbolicity of the complement of an ample divisor in an abelian variety, Math. Anal. \textbf{306}(1996), 743-758.
      \bibitem{S2002} Siu Y.T.: Some recent transcendental techniques in algebraic and complex geometry, ICM, Vol. I, (2002), 439-448.
     \bibitem{Stoll} Stoll W.: Value distribution on parabolic spaces,
        Lecture Notes in Math. Springer-Verlag, \textbf{600}(1977).
    
\end{thebibliography}
\end{document}